\newtheorem{theorem}{Theorem}
\newtheorem{lemma}{Lemma}
\newtheorem{remark}{Remark}
\newtheorem{corollary}{Corollary}
\newcommand{\R}{\mathbb R}
\newcommand{\PP}{\mathbb P}
\newcommand{\QQ}{\mathbb Q}
\newcommand{\E}{\mathbb E}
\newcommand{\Sp}{\mathbb S}
\newcommand{\Sig}{\mathfrak{S}_d}
\newcommand{\Var}{\textsf{Var}}
\newcommand{\cov}{\textsf{cov}}
\newcommand{\Tr}{\textsf{Tr}}
\newcommand*\diff{\mathop{}\!\mathrm{d}}
\newcommand{\DS}{\displaystyle}
\newcommand{\prt}[1]{\left( \, #1  \, \right)}
\newcommand{\crc}[1]{\left[ \, #1  \, \right]}
\newcommand{\pl}[1]{\partial ^#1 \ell}
\newcommand{\pll}{\partial \ell}
\begin{document}

\begin{frontmatter}

\title{Learning rates for Gaussian mixtures under group invariance}
\runtitle{Gaussian mixtures with isometry group invariance}


\author{\fnms{Victor-Emmanuel} \snm{Brunel}\ead[label=e1]{victor.emmanuel.brunel@ensae.fr}}
\address{\printead{e1}}
\affiliation{ENSAE ParisTech}

\runauthor{V.-E. Brunel}

\setattribute{abstractname}{skip} {{\bf Abstract:} }

\begin{abstract}
We study the pointwise maximum likelihood estimation rates for a class of Gaussian mixtures that are invariant under the action of some isometry group. This model is also known as multi-reference alignment, where random isometries of a given vector are observed, up to Gaussian noise. We completely characterize the speed of the maximum likelihood estimator, by giving a comprehensive description of the likelihood geometry of the model. We show that the unknown parameter can always be decomposed into two components, one of which can be estimated at the fast rate $n^{-1/2}$, the other one being estimated at the slower rate $n^{-1/4}$. We provide an algebraic description and a geometric interpretation of these facts.

\end{abstract}

\begin{keyword}[class=MSC]
\kwd[Primary ]{62-02}
\kwd{62G05}
\end{keyword}

\begin{keyword}
\kwd{Asymptotic rates}
\kwd{Gaussian mixtures}
\kwd{Maximum likelihood}
\kwd{Group actions}
\end{keyword}

\end{frontmatter}

\section{Introduction}

In practical situations, when one has access to many noisy observations of an object, that object may have been rotated, or shifted, across the observations. This can be the case, for instance, in chemistry or nanobiology: If the goal is to learn the structure of a molecule from many samples, the molecule is very likely to move or, even, to appear as one of its isomers, in each sample. Then, the main challenge, on top of denoising the data, is to align all the observations together. When the configuration of the unknown object in each observation is itself random, the observation scheme can be modeled as a mixture of distributions, where each component of the mixture is centered around a modified version of the unknown object. When these versions are all isometric transformations of each other, the problem is also called the multi-reference alignment problem, see \cite{sorzano2010clustering,singer2011three,bandeira2014multireference,bandeira2017optimal,perry2017sample,wein2018statistical} and the references therein. To fix the ideas, we formalize the model as follows. Let the unknown object be represented by a vector $\theta^*\in\R^d$ and assume that we have access to $n$ independent observations $y_i=g_i\theta^*+\sigma\varepsilon_i, i=1,\ldots,n$ where $g_i\in G$ is possibly random, $G$ is a finite subgroup of isometries, $\sigma>0$ is known and $\varepsilon_i$ is a standard Gaussian vector. Here, we assume that $\sigma$ is known, in order to simplify the exposition: Our focus is only on understanding the challenges in learning $\theta^*$ and we might as well assume that $\sigma^2=1$, for the sake of simplicity. Here, we let $G$ be finite, but we believe that our results can be easily extended to the case of any closed (therefore compact) subgroup of isometries. We focus on the Gaussian noise setup because when the $g_i$'s are i.i.d., the model amounts to a mixture of Gaussian distributions, which is an extremely important model in modern learning theory and it still generates very active research, see e.g. \cite{dasgupta1999learning,sanjeev2001learning,kalai2010efficiently,moitra2010settling,azizyan2013minimax,hsu2013learning,hardt2015tight,amendola2016algebraic,balakrishnan2017statistical,wu2018optimal} and the references therein. Moreover, by multiplying each observation $y_i$ by an independent element of the group $G$, chosen uniformly at random, one can always assume that our observations come from a Gaussian mixture with uniform weights, which we assume in the sequel. However, we believe that in that setup, our results could be extended to mixtures of more general families with a location parameter.

For $\theta\in\R^d$, denote by $\PP_\theta$ the mixture of Gaussians with means $g\theta, g\in G$, identity covariance matrix and uniform weights, that is, $\DS \PP_\theta=\frac{1}{|G|}\sum_{g\in G}\mathcal N(g\theta,I)$. In this model, $\theta$ is not identified since $\PP_\theta=\PP_{g\theta}$, for all $g\in G$. Hence, $\theta$ can only be estimated up to the action of the group $G$. It is important to note that the bigger $G$ is, the less information the model carries about $\theta$. Consider the following two extremes: When $G=\{I\}$ and when $G=\mathcal O(d)$ (the group of all isometries). In the former case, $\theta$ is fully identified, hence, all its $d$ components can be learned. In the latter case, only the Euclidean norm of $\theta$ is identified, which is a one dimensional parameter and the estimation of $\|\theta\|$ becomes a much easier problem.

There are two most popular strategies for learning mixtures of Gaussians: Maximum likelihood estimation and methods of moments. The maximum likelihood estimator (MLE), which we focus on in this work, is usually implemented via the expectation-maximization (EM) algorithm. We refer to \cite{balakrishnan2017statistical} who recently showed some asymptotic guarantees for the EM algorithm by analysing its behavior at the population level. More generally, we emphasize the importance of understanding a statistical model in depth at a population level (which amounts to studying its asymptotics as the sample size grows to infinity), which is what motivates our work. The method of moments is algorithmically more feasible, with more algorithmic guarantees, see \cite{wu2018optimal} and the references therein. 

In this work, we are interested in pointwise rates for the estimation of the parameter $\theta^*$, i.e., the estimation of the centers of the mixture $\PP_{\theta^*}$. In \cite{chen1995optimal, heinrich2015optimal, ho2016convergence}, pointwise rates are obtained (together with minimax rates) for the estimation of $\PP_{\theta^*}$, which are similar to ours ($n^{-1/2}$ and $n^{-1/4}$). They measure the accuracy of their estimators in terms of distances between distributions (e.g., Hellinger, or Wasserstein metrics, the latter proving to be a natural choice for mixtures because of the lack of identifiability \cite{nguyen2013convergence}). However in practice, these metrics between distributions do not easily translate into a geometric distance between their parameters, hence, in our setup, they can not provide a subtle enough description of the pointwise estimation rates for the centers of the mixture. The main difficulty, in general, is that there is no natural metric for the parameter space due to the lack of identifiability of the parameters. Here, the mixtures exhibit a specific structure associated with the group $G$ and the identifiable set for $\theta^*$ (i.e., the collection of all vectors $\theta$ such that $\PP_{\theta}=\PP_{\theta^*}$) is $\Theta(\theta^*)=\{g\theta^*:g\in G\}$. Hence, there is a natural metric on the identifiable sets which translates into a geometric metric between the centers, namely, $\tilde\rho(\Theta(\theta),\Theta(\theta'))=\min_{t\in\Theta(\theta),t'\in\Theta(\theta')}\|t-t'\|=\min_{g\in G}\|g\theta-\theta'\|$. Thus, we can measure the learning error in terms of the Euclidean norm in the parameter space, which allows us to break down our analysis to the individual rates for each component of $\theta^*$ separately. By this, we mean that we can show that some components of $\theta^*$ can be estimated at a given rate, whereas other components of the same $\theta^*$ may be estimated at a faster rate (this will be made precise in Theorem \ref{ThmStats}). In \cite{bandeira2017optimal,perry2017sample}, the focus is on the minimax rates for the estimation of $\theta^*$, only when $G$ is the group of coordinate cyclic shifts. Interestingly, as already pointed out by \cite{heinrich2015optimal}, there may be a huge discrepancy between pointwise and minimax rates, due to the non uniformity of the pointwise rates. In \cite{bandeira2017optimal} and \cite{perry2017sample}, it is assumed that all the centers $g\theta^*, g\in G$ are separated away from each other, which yields $n^{-1/2}$ rates: There, the focus is rather on the dependence of the optimal rates on $\sigma^2$, which matters a lot in applications where the signal-to-noise ratio can be very small, e.g., cryo-elctron microscopy. However, imposing that all the centers are pairwise distinct can be interpreted as assuming that $\theta^*$ does not exhibit any symmetry that would be encoded in $G$, which, in practice, is debatable.

In Section \ref{SectionLikelihood}, we describe the likelihood geometry of the model, for any finite group of isometries $G$. At the population level, we characterize the set of $\theta$'s for which the Fisher information is invertible and, in general, we give a full description of the null space of the Fisher information in terms of $\theta^*$ and its interaction with $G$ and we study higher order derivatives of the population log-likelihood function. As a consequence, in Section \ref{Sec:Statistics}, we derive statistical properties of the MLE and we characterize the pointwise rates of convergence of this estimator, when projected on orthogonal subspaces. In brief, we show that $\theta^*$ can always be decomposed into two components: One for which the MLE achieves the parametric rate $n^{-1/2}$ and one for which it achieves the slower rate $n^{-1/4}$, and we give a precise description of this decomposition. As a byproduct, we show that the pointwise estimation rate of MLE is never worse than $n^{-1/4}$. Finally, in Section \ref{Sec:Examples}, we illustrate our results by considering some examples of groups of isometries. Some of the proofs and intermediate lemmas are deferred to the appendix.

\paragraph{Notation}
In this work, the ambient dimension is denoted by $d$. The Euclidean norm in $\R^d$ is denoted by $\|\cdot\|$ and the transpose of a vector $u\in\R^d$ is $u^\top$. 

The complement of a set or an event $A$ is denoted by $A^{\complement}$ and the cardinality of a set $A$ is denoted by $|A|$.

If $f:\R^d\to\R$ is a smooth function, we denote by $\diff^k f(x)$ its $k$-th differential at a point $x\in\R^d$: It is a symmetric function of $k$ $d$-dimensional variables. When $f:\R^d\times\R^d\to\R$ is a function of two variables $y$ and $\theta$ that is smooth with respect to $\theta$, we denote by $\partial_\theta^k f(y,\theta)$ its $k$-th differential with respect to $\theta$ at the point $(y,\theta)$: This is also a symmetric function of $k$ $d$-dimensional variables. When $k=1$ (resp. $k=2$), we also write $\DS \partial_\theta(y,\theta)(u)=u^\top\frac{\partial f}{\partial\theta}(y,\theta)$ (resp. $\DS \partial_\theta^2(y,\theta)(u,v)=u^\top\frac{\partial^2 f}{\partial\theta\partial\theta^\top}(y,\theta)v$).

We let $G$ be a subgroup of isometries, which we suppose fixed and known. For $\theta\in\R^d$, we denote by $\DS \PP_\theta=\frac{1}{|G|}\sum_{g\in G}\mathcal N(g\theta,I)$, where $\mathcal N$ is the symbol for Gaussian distributions and $I$ is the identity matrix in $\R^{d\times d}$. The corresponding expectation, variance and covariance operators are denoted by $\E_{\theta}$, $\Var_\theta$ and $\cov_\theta$, respectively.

\section{Likelihood geometry of the model} \label{SectionLikelihood}

Let $\theta^*\in\R^d$ be fixed and consider a sequence $Y,Y_1,Y_2,\ldots$ of i.i.d. random vectors distributed according to $\PP_{\theta^*}$. The corresponding log-likelihood is defined, for all positive integer $n$, as
\begin{equation} \label{loglikelihood}
	\hat\Psi_n(\theta)=\frac{1}{n}\sum_{i=1}^n \log L(Y_i,\theta), \quad \forall \theta\in\R^d,
\end{equation}
where $L(y,\theta)$ is the density of $\PP_{\theta}$ with respect to the Lebesgue measure on $\R^d$:
\begin{equation}
	L(y,\theta)=\frac{1}{|G|(2\pi)^{d/2}}\sum_{g\in G}e^{-\frac{1}{2}\|y-g\theta\|^2}, \quad \forall y,\theta\in\R^d.
\end{equation}
In general, the factor $1/n$ does not appear in the definition of the log-likelihood, but we include it so the expectation of $\hat\Psi(\theta)$ is the population log-likelihood of the parametric model, given by 
\begin{equation}
	\Psi(\theta)=\E_{\theta^*}[\log L(Y,\theta)].
\end{equation}

Then, the Fisher information of the model is defined as $I(\theta^*)=-\diff^2\Psi(\theta^*)$. 

Denote by $H$ the stabilizer of $\theta^*$, i.e., the collection of all elements $g\in G$ such that $g\theta^*=\theta^*$, and by $\DS \bar H= \frac{1}{|H|}\sum_{g\in H}g$. Note that $H$ is always nonempty since at least the identity belongs to $H$, hence, $\bar H$ is always well defined. Moreover, it is easy to check that $H$ is a subgroup of $G$.

\begin{theorem} \label{maintheorem}
	The null space of $I(\theta^*)$ coincides with the null space of $\bar H$, i.e.,  
	\begin{equation}
		\forall u\in\R^d, u^\top I(\theta^*)u=0\iff \bar H u=0.
	\end{equation}
	Moreover, if $u$ is in the nullspace of $I(\theta^*)$, then $\diff^4\Psi(\theta^*)(u,u,u,u)=0$ only if $u=0$.
\end{theorem}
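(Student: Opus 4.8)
The plan is to show that the fourth-order term is \emph{strictly} negative in every nonzero null direction; since the statement only asks that it be nonzero, and one can show it is always $\le 0$, this strictness is exactly what is needed. First I would record that $\theta^*$ globally maximizes $\Psi$, because $\Psi(\theta^*)-\Psi(\theta)=\E_{\theta^*}[\log(L(Y,\theta^*)/L(Y,\theta))]$ is a Kullback--Leibler divergence, hence nonnegative. Fixing a null direction $u$ (so $\bar H u=0$ by the first part of the theorem) and setting $f(t)=\Psi(\theta^*+tu)$, the function $f$ has a global maximum at $0$ with $f'(0)=0$ and $f''(0)=-u^\top I(\theta^*)u=0$; a maximum with vanishing second derivative forces $f'''(0)=0$ and $f^{(4)}(0)\le 0$. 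It remains to compute $f^{(4)}(0)=\diff^4\Psi(\theta^*)(u,u,u,u)$ and to decide when it vanishes.

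The computation is cleanest through the likelihood ratio. Using $\|g\theta\|=\|\theta\|$, write $\log L(y,\theta)=c(y)-\tfrac12\|\theta\|^2+\log\sum_{g\in G}e^{y^\top g\theta}$, so that the $t$-derivatives of $\log L(y,\theta^*+tu)$ are the cumulants of the scalar variable $W=(gu)^\top y$ under the posterior weights $w_g(y)\propto e^{y^\top g\theta^*}$; write $\E_w$ and $\Var_w$ for expectation and variance in these weights. Two features of a null direction drive everything. Since $\bar H^\top=\bar H$ and $\bar H\theta^*=\theta^*$, one gets $u^\top\theta^*=u^\top\bar H\theta^*=(\bar Hu)^\top\theta^*=0$; and grouping $G$ into cosets of $H$, on each of which the weights $w_g$ are constant, each coset contributes $(c\bar H u)^\top y=0$, so that $\E_w[W]\equiv 0$. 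Consequently the directional score $S=u^\top\partial_\theta\log L(Y,\theta^*)=\E_w[W]-u^\top\theta^*$ vanishes identically, which is consistent with $\Var_{\theta^*}(S)=u^\top I(\theta^*)u=0$.

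Now set $R_t=L(\cdot,\theta^*+tu)/L(\cdot,\theta^*)$ and $D_t=R_t-1$, so $\E_{\theta^*}[D_t]=0$. The linear-in-$t$ coefficient of $D_t$ is exactly $S$, so the vanishing of $S$ gives $D_t=t^2T+O(t^3)$ with $T=\tfrac12(\Var_w(W)-\|u\|^2)$. Expanding $\E_{\theta^*}[\log R_t]=-\tfrac12\E_{\theta^*}[D_t^2]+\tfrac13\E_{\theta^*}[D_t^3]-\cdots$, the cubic and higher terms are $O(t^6)$, so matching the $t^4$ coefficient against $f(t)-f(0)=\tfrac{t^4}{24}f^{(4)}(0)+o(t^4)$ yields the key identity
\begin{equation*}
\diff^4\Psi(\theta^*)(u,u,u,u)=-3\,\E_{\theta^*}\!\left[\left(\Var_w(W)-\|u\|^2\right)^2\right].
\end{equation*}
If this vanishes then $\Var_w(W)(y)=\|u\|^2$ for $\PP_{\theta^*}$-almost every $y$, hence, the mixture density being positive, for Lebesgue-almost every $y$. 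But $y\mapsto\Var_w(W)(y)$ is a ratio of exponential polynomials with everywhere-positive denominator, hence real-analytic, so equality on a set of positive measure forces it for all $y\in\R^d$. Evaluating at $y=0$, where all weights are equal and $W\equiv 0$, gives $\Var_w(W)(0)=0$, whence $\|u\|^2=0$ and $u=0$.

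I expect the main obstacle to be the rigorous justification of the term-by-term expansion of $\E_{\theta^*}[\log R_t]$, namely dominating $D_t$ and its low-order moments uniformly for small $t$ (the weights are bounded and $W$ has Gaussian tails under $\PP_{\theta^*}$, so this should be available) so that expectation and the $t$-expansion may be interchanged, together with the analytic upgrade from almost-everywhere to everywhere equality used in the final step.
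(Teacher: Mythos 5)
There is a genuine gap: your proposal proves only the ``Moreover'' clause of Theorem \ref{maintheorem} and is circular about the main equivalence. The theorem asserts $u^\top I(\theta^*)u=0\iff \bar Hu=0$, and you invoke this rather than prove it (``fixing a null direction $u$, so $\bar Hu=0$ \emph{by the first part of the theorem}''). Your coset computation showing that $\bar Hu=0$ forces the directional score to vanish does establish the easy backward implication, but the forward implication --- that $\Var_{\theta^*}\crc{u^\top\frac{\partial\log L}{\partial\theta}(Y,\theta^*)}=0$ forces $\bar Hu=0$ --- appears nowhere in your argument, and it is where the paper's proof does its real work: from the a.s.-vanishing (hence, by continuity, everywhere-vanishing) score one gets $\sum_{S\in E}e^{y^\top\bar S\theta^*}\,y^\top\bar Su=0$ for all $y$ (after the $y=0$ evaluation giving $(\theta^*)^\top u=0$), and then, for each coset $S_0$, one takes $y$ in the open cone $C_0=\{y\in\R^d: y^\top\bar S_0\theta^*>y^\top\bar S\theta^*,\ \forall S\neq S_0\}$ and lets $t\to\infty$ along $ty$, so that the dominant exponential isolates $y^\top\bar S_0u=0$ on an open set, forcing $\bar S_0u=0$ and in particular $\bar Hu=0$. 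Nothing in your likelihood-ratio machinery substitutes for this separation-of-exponentials step, which is the hard half of the statement.

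The fourth-derivative half of your argument is essentially sound and genuinely different in style: your identity $\diff^4\Psi(\theta^*)(u,u,u,u)=-3\,\E_{\theta^*}\crc{\prt{\Var_w(W)-\|u\|^2}^2}$ is exactly the formula of Lemma \ref{MainLemma} (iii), since the vanishing score makes $\frac{1}{L(Y,\theta^*)}u^\top\frac{\partial^2 L}{\partial\theta\partial\theta^\top}(Y,\theta^*)u=\Var_w(W)-\|u\|^2$; you obtain it by expanding $\E_{\theta^*}[\log R_t]$ rather than by differentiating $\int L\,\diff\mu=1$ four times, and your endgame (a.e.\ to everywhere, then evaluate at $y=0$) matches the paper's. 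Two remarks. First, you could decouple this half from the unproven equivalence: $u^\top I(\theta^*)u=0$ alone gives a score vanishing for every $y$; evaluating at $y=0$ (where $W\equiv 0$) already yields $u^\top\theta^*=0$ and hence $\E_w[W]\equiv 0$, which is all your expansion uses. Second, the domination issue you flag is real, not cosmetic: $|D_t|<1$ fails for $\|y\|\gtrsim 1/t$, so the term-by-term expansion of $\log(1+D_t)$ requires a tail split (Gaussian tails make the bad region's contribution $o(t^4)$), whereas the paper avoids this entirely by differentiating under the integral under the hypotheses of Lemma \ref{MainLemma}. As submitted, however, the equivalence --- half the theorem, and its harder direction --- is missing.
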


The proof of Theorem \ref{maintheorem} relies on the following result, the first two conclusions of which are folklore in parametric statistics. The third conclusion of the next lemma is essential in our analysis, since it drives the statistical rates that we discuss in Section \ref{Sec:Statistics}. This lemma is quite technical, hence, we defer its proof to the appendix. However, it is easy to check that the assumptions are all satisfied in our Gaussian mixture model.

\begin{lemma} \label{MainLemma}
	Let $(\QQ_\theta)_{\theta\in\Theta}$ be a family of probability distributions on some abstract space $\mathcal Y$, where $\Theta\subseteq\R^d$ and let $\theta^*$ be in the interior of $\Theta$. Let $\E_{\theta^*}$ and $\Var_{\theta^*}$ stand for the expectation and the variance operators associated with $\QQ_{\theta^*}$, respectively. Assume that there exists a measure $\mu$ on $\mathcal Y$ and a neighborhood $\mathcal V$ of $\theta^*$ in $\Theta$ such that the following holds:
\begin{itemize}
	\item $\QQ_\theta$ is absolutely continuous with respect to $\mu$ for all $\theta\in\mathcal V$;
	\item The support of $\QQ_{\theta}$ does not depend on $\theta$;
	\item The density $\DS L(y,\theta)=\frac{\diff \QQ_\theta}{\diff \mu}(y), y\in\mathcal Y, \theta\in\Theta$, is five times differentiable with respect to $\theta\in\mathcal V$, for $\mu$-almost all $y\in\mathcal Y$;
	\item For $\mu$-almost all $y\in\mathcal Y$, the first four derivatives of $L(y,\cdot)$ with respect to $\theta$ are uniformly bounded on $\mathcal V$ by $\mu$-integrable functions and the first four derivatives of $\log L(y,\theta)$ with respect to $\theta$ are uniformly bounded on $\mathcal V$ by $\QQ_{\theta^*}$-integrable functions. 
\end{itemize}
Denote by $\Psi(\theta)=\E_{\theta^*}\crc{\log L(Y,\theta)}$, for all $\theta\in\mathcal V$. Then, 
\begin{enumerate}
	\item[(i)] $\DS \frac{\partial\Psi}{\partial\theta}(\theta^*)=\E_{\theta^*}\crc{\frac{\partial \log L}{\partial \theta}(Y,\theta^*)}=0$;
	\item[(ii)] For all $u\in\R^d$, $\DS \diff^2\Psi(\theta^*)(u,u)=-\Var_{\theta^*}\crc{u^\top \frac{\partial \log L}{\partial \theta}(Y,\theta^*)}$;
	\item[(iii)] For all $w\in\R^d$ such that $\diff^2\Psi(\theta^*)(w,w)=0$, it holds that $\diff^3\Psi(\theta^*)(w,w,w)=0$ and that $\diff^4\Psi(\theta^*)(w,w,w,w)=-3\Var_{\theta^*}\crc{\frac{1}{L(Y,\theta^*)}w^\top\frac{\partial^2 L}{\partial\theta\partial\theta^\top}(Y,\theta^*)w}$.
\end{enumerate}
\end{lemma}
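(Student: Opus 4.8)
The plan is to obtain all three conclusions by differentiating $\Psi(\theta)=\E_{\theta^*}\crc{\log L(Y,\theta)}$ under the integral sign along a fixed direction and by combining the resulting expansions of the score with the Bartlett-type identities coming from $\int L(y,\theta)\,\diff\mu(y)=1$. Fix $w\in\R^d$ and, for $k\in\{1,2,3,4\}$, abbreviate $D^k:=\partial_\theta^k\log L(Y,\theta^*)(w,\ldots,w)$ and $a_k:=\partial_\theta^k L(Y,\theta^*)(w,\ldots,w)/L(Y,\theta^*)$. Using the domination of the first four derivatives of $\log L$ by $\QQ_{\theta^*}$-integrable functions I would first differentiate through the expectation to get $\diff^k\Psi(\theta^*)(w,\ldots,w)=\E_{\theta^*}\crc{D^k}$ for $k=1,2,3,4$. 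In parallel, differentiating $\int L(y,\theta)\,\diff\mu(y)=1$ along $w$ up to order four and interchanging (now using the domination of the derivatives of $L$ by $\mu$-integrable functions) gives $\int \partial_\theta^k L(y,\theta^*)(w,\ldots,w)\,\diff\mu(y)=0$. Here the hypothesis that the support of $\QQ_\theta$ does not depend on $\theta$ is essential, since it forces $\partial_\theta^k L(\cdot,\theta^*)$ to vanish $\mu$-almost everywhere off the support of $\QQ_{\theta^*}$; this is exactly what lets me rewrite the last integral as $\E_{\theta^*}\crc{a_k}$, yielding the moment conditions $\E_{\theta^*}\crc{a_k}=0$ for every $k\ge1$.

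Assertion (i) is then the case $k=1$, because $a_1=w^\top\partial_\theta\log L(Y,\theta^*)$ and $\diff\Psi(\theta^*)(w)=\E_{\theta^*}\crc{a_1}=0$ for every $w$. For (ii) and (iii) I would record the chain-rule (Fa\`a di Bruno) expansions of $\log L$ along $w$: $D^1=a_1$, $D^2=a_2-a_1^2$, $D^3=a_3-3a_1a_2+2a_1^3$, and $D^4=a_4-4a_1a_3-3a_2^2+12a_1^2a_2-6a_1^4$. Taking $\E_{\theta^*}$ of $D^2$ and using $\E_{\theta^*}\crc{a_2}=0$ together with $\E_{\theta^*}\crc{a_1}=0$ gives $\diff^2\Psi(\theta^*)(w,w)=-\E_{\theta^*}\crc{a_1^2}=-\Var_{\theta^*}\crc{a_1}$, which is (ii).

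For (iii), the pivotal observation is that the hypothesis $\diff^2\Psi(\theta^*)(w,w)=0$ forces, through (ii), $\Var_{\theta^*}\crc{a_1}=0$; since $\E_{\theta^*}\crc{a_1}=0$ as well, this gives $a_1=0$ $\QQ_{\theta^*}$-almost surely. Substituting $a_1=0$ collapses the two expansions above to $D^3=a_3$ and $D^4=a_4-3a_2^2$ ($\QQ_{\theta^*}$-almost surely), so that $\diff^3\Psi(\theta^*)(w,w,w)=\E_{\theta^*}\crc{a_3}=0$ and $\diff^4\Psi(\theta^*)(w,w,w,w)=\E_{\theta^*}\crc{a_4}-3\E_{\theta^*}\crc{a_2^2}=-3\E_{\theta^*}\crc{a_2^2}$. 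As $\E_{\theta^*}\crc{a_2}=0$, the right-hand side equals $-3\Var_{\theta^*}\crc{a_2}$, which is the announced identity since $a_2=L(Y,\theta^*)^{-1}w^\top\partial^2_\theta L(Y,\theta^*)w$.

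The main obstacle is analytic rather than algebraic: one must check that the stated domination hypotheses genuinely license the four interchanges of differentiation and integration for both $L$ and $\log L$, and secure the integrability needed to write $\E_{\theta^*}\crc{a_2^2}=\Var_{\theta^*}\crc{a_2}$. I would obtain the latter by noting that $D^4$ is dominated by a $\QQ_{\theta^*}$-integrable function and that $a_4$ is $\QQ_{\theta^*}$-integrable (its expectation being $\int\partial_\theta^4 L(\cdot,\theta^*)(w,\ldots,w)\,\diff\mu$), so that $a_2^2=(a_4-D^4)/3$ is $\QQ_{\theta^*}$-integrable. A secondary but genuine subtlety, worth isolating as a preliminary step, is the use of the constant-support hypothesis to pass from $\mu$-integrals of the derivatives of $L$ to the $\QQ_{\theta^*}$-moments $\E_{\theta^*}\crc{a_k}$.
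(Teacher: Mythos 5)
Your proof is correct and follows essentially the same route as the paper's: both differentiate the normalization identity $\int_{\mathcal Y} L(y,\theta)\,\diff\mu(y)=1$ up to fourth order to obtain Bartlett-type identities, and both exploit the key fact that $\diff^2\Psi(\theta^*)(w,w)=0$ forces the score $w^\top\frac{\partial \log L}{\partial\theta}(Y,\theta^*)$ to vanish $\QQ_{\theta^*}$-almost surely, which collapses the third- and fourth-order identities to the stated conclusions. Your Fa\`a di Bruno bookkeeping in terms of $a_k=\partial_\theta^k L(Y,\theta^*)(w,\ldots,w)/L(Y,\theta^*)$ is an equivalent reorganization of the paper's expansions in derivatives of $\log L$, and your explicit handling of the constant-support hypothesis and of the integrability of $a_2^2$ merely makes precise steps the paper leaves implicit.
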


Before giving the proof of Theorem \ref{maintheorem}, we state one more lemma, which gives a better description of the operator $\bar H$ that characterizes the nullspace of the Fisher information. We let $\sim$ be the equivalence relation on $G$ defined by $g\sim g'\iff g\theta^*=g'\theta^*\iff g^{-1}g'\in H$ and we denote by $E=G/H$ the set of equivalence classes. In other words, $E$ partitions $G$ into subsets such that any two elements $g,g'\in G$ are in the same set $S\in E$ if and only if $g\theta^*=g'\theta^*$. For example, $H\in E$ and any $S\in E$ is of the form $S=gH=\{gh: h\in H\}$ for some $g\in G$ (or, more precisely, for any $g\in S$). As a consequence, all the sets in $E$ have the same cardinality: $|S|=|H|$ for all $S\in E$. For all $S\in E$, let $\bar S=\frac{1}{|S|}\sum_{g\in S}g$.

\begin{lemma} \label{LemmaProj}
	\begin{enumerate}
		\item[(i)] The map $\bar H$ is the orthogonal projection onto the set of all vectors $u\in\R^d$ that are stabilized by $H$, i.e., $\{u\in\R^d: hu=u, \forall h\in H\}$.
		\item[(ii)] For all $S\in E$ and $g\in S$, $\bar S=g\bar H$.
		\item[(iii)] Let $v,w\in\R^d$ such that $\bar Hv=v$ and $\bar Hw=0$. Then, for all $S\in E$ and any $g\in S$, $\bar Sv=gv$ and $\bar Sw=0$. 
	\end{enumerate}
\end{lemma}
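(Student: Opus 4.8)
The plan is to treat all three parts as facts about the group-averaging (Reynolds) operator $\bar H$, with (i) doing essentially all the work and (ii)--(iii) following by short direct computations. For (i), I would use the standard characterization that a linear map on $\R^d$ is the orthogonal projection onto its image if and only if it is idempotent and self-adjoint, and then separately identify that image. Idempotence, $\bar H^2=\bar H$, comes purely from the group structure: expanding the product and noting that for each fixed $g\in H$ the map $g'\mapsto gg'$ permutes $H$ collapses the double sum $\frac{1}{|H|^2}\sum_{g,g'\in H}gg'$ back to $\bar H$. Self-adjointness, $\bar H^\top=\bar H$, is the one place where the hypothesis that $G$ consists of \emph{isometries} is genuinely needed: each $g\in H$ is orthogonal, so $g^\top=g^{-1}$, and closure of $H$ under inversion gives $\bar H^\top=\frac{1}{|H|}\sum_{g\in H}g^{-1}=\bar H$.

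It remains, for (i), to identify $\mathrm{Im}(\bar H)$ with $V:=\{u\in\R^d: hu=u\ \forall h\in H\}$. I would check the two inclusions: $\bar H$ fixes every $u\in V$ (averaging the constant family $hu=u$), and $\bar H$ sends all of $\R^d$ into $V$, since for any $h'\in H$ the reindexing $h\mapsto h'h$ yields $h'\bar Hu=\bar Hu$. Hence $\mathrm{Im}(\bar H)=V$ and (i) follows. Part (ii) is then immediate: any $S\in E$ equals $gH$ for an arbitrary representative $g\in S$, so factoring $g$ out of the average gives $\bar S=\frac{1}{|H|}\sum_{h\in H}gh=g\,\bar H$. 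For (iii), combining (ii) with the hypotheses $\bar Hv=v$ and $\bar Hw=0$ gives $\bar Sv=g\bar Hv=gv$ and $\bar Sw=g\bar Hw=0$; I would add the remark that $gv$ does not depend on the chosen representative $g\in S$, because $\bar Hv=v$ means $v\in V$ by (i), so replacing $g$ by $gh$ with $h\in H$ leaves $gv$ unchanged.

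The honest assessment is that there is no deep obstacle here: these are all routine properties of averaging over a finite group, and the computations are a few lines each. The only point that deserves care is remembering to invoke the orthogonality of the group elements in the self-adjointness step of (i); without it one would only obtain that $\bar H$ is \emph{some} projection onto $V$, whereas the downstream analysis of the Fisher information (and the characterization of its null space in Theorem \ref{maintheorem}) relies on $\bar H$ being the \emph{orthogonal} projection.
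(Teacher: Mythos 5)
Your proof is correct and follows essentially the same route as the paper: idempotence and symmetry of $\bar H$ via reindexing bijections on $H$ (with orthogonality $g^\top=g^{-1}$ used exactly where you flag it), identification of the range with the fixed-point set by the same two inclusions, and parts (ii)--(iii) by factoring $S=gH$. Your added remark that $gv$ is independent of the representative $g\in S$ is a harmless bonus not in the paper's proof, but nothing differs in substance.
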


The proof of Lemma \ref{LemmaProj} is deferred to the appendix. 
We are now in a right position to give the proof of Theorem \ref{maintheorem}.

\begin{proof}[Proof of Theorem \ref{maintheorem}]
First, note that the assumptions of Lemma \ref{MainLemma} are easily verified for the family $(\PP_\theta)_{\theta\in\R^d}$. Therefore, for all $u\in\R^d$, 
\begin{equation} \label{mainthm1}
\E_{\theta^*}\crc{u^\top \frac{\partial \log L}{\partial \theta}(Y,\theta^*)}=0.
\end{equation}
Now, $u$ is in the null space of $I(\theta^*)$ if and only if $u^\top I(\theta^*)u=0$, since $I(\theta^*)$ is positive semidefinite, i.e., if and only if $\Var_{\theta^*}\crc{u^\top \frac{\partial \log L}{\partial\theta}(Y,\theta^*)}=0$, again by Lemma \ref{MainLemma}. Hence, the random variable $\DS u^\top \frac{\partial \log L}{\partial\theta}(Y,\theta^*)$ must be constant $\PP_{\theta^*}$-almost surely and by \eqref{mainthm1}, it must be zero. In other words, $\DS u^\top \frac{\partial \log L}{\partial \theta}(y,\theta^*)=0$, for all $y\in\R^d$. A straightforward computation shows that this is equivalent to
\begin{equation*} 
	\sum_{g\in G} e^{-\frac{1}{2}\|y-g\theta^*\|^2}(y-g\theta^*)^\top gu =0, 
\end{equation*}
for all $y\in\R^d$ which, by Lemma \ref{LemmaProj}, can be rewritten as
\begin{equation} \label{mainthm3}
	\sum_{S\in E} e^{-\frac{1}{2}\|y-\bar S\theta^*\|^2}(y-\bar S\theta^*)^\top \bar Su =0.
\end{equation}
As a straightforward consequence of Lemma \ref{LemmaProj}, for all $S\in E$, $\|\bar S\theta^*\|^2=\|\theta^*\|^2$ and \eqref{mainthm3} becomes
\begin{equation} \label{mainthm4}
	\sum_{S\in E} e^{y^\top\bar S\theta^*}(y-\bar S\theta^*)^\top \bar Su =0.
\end{equation}
In particular, taking $y=0$ yields that $\DS \sum_{S\in E}(\bar S\theta^*)^\top\bar Su=0$. For all $S\in E$, write $\bar S$ as $g\bar H$ for (any) $g\in S$, as we have seen above; Then, $\bar S^\top\bar S=\bar H^\top g^\top g\bar H=\bar H^\top\bar H=\bar H$, yielding that $(\theta^*)^\top u=0$ and \eqref{mainthm4} becomes
\begin{equation} \label{mainthm5}
	\sum_{S\in E} e^{y^\top\bar S\theta^*}y^\top \bar Su =0.
\end{equation}
From Lemma \ref{LemmaProj}, it is clear that the vectors $\bar S\theta^*$ are pairwise distinct. Now, fix $S_0\in E$ and let $C_0=\left\{y\in\R^d: y^\top\bar {S_0}\theta^*>y^\top \bar S\theta^*, \forall S\in E, S\neq S_0\right\}$: This is an open, nonempty set. Let us show that for all $y\in C_0$, $y^\top \bar S_0 u=0$. This will yield that $\bar S_0 u$ is necessarily in the orthogonal of $C_0$, which is $\{0\}$ since $C_0$ is open. Let $y\in C_0$. Then, \eqref{mainthm5} implies that
\begin{equation}
	0=\lim_{t\to\infty}e^{ty^\top\bar S_0\theta^*}\sum_{S\in E} e^{ty^\top\bar S\theta^*}y^\top \bar Su =y^\top \bar S_0u,
\end{equation}
which is what we wanted to prove.

Conversely, if $\bar Hu=0$, then $\bar Su=0$ for all $S\in E$, it is true that \eqref{mainthm3} must hold. Therefore, reverse-engineering the previous computations yields that $\DS u^\top \frac{\partial \log L}{\partial\theta}(Y,\theta^*)=0$ $\PP_{\theta^*}$-almost surely, yielding that $\diff^2\Psi(\theta^*)(u,u)=0$, which concludes the proof of the first part of the theorem.

Now, let $u\in\R^d$ such that $\bar Hu=0$. Then, by Lemma \ref{MainLemma}, $\diff^4\Psi(\theta^*)(u,u,u,u)=0$ if and only if the random variable $\DS \frac{1}{L(Y,\theta^*)}u^\top\frac{\partial^2 L}{\partial\theta\partial\theta^\top}(Y,\theta^*)u$ is constant $\PP_{\theta^*}$-almost surely. Since its expectation with respect to $\PP_{\theta^*}$ is zero, then it must be equal to zero $\PP_{\theta^*}$-almost surely. In other words, $\DS u^\top\frac{\partial^2 L}{\partial\theta\partial\theta^\top}(y,\theta^*)u=0$, for all $y\in\R^d$. Up to some constant factor $C>0$,
\begin{align*}
	u^\top\frac{\partial^2 L}{\partial\theta\partial\theta^\top}(y,\theta^*)u & = C\sum_{g\in G}e^{-\frac{1}{2}\|y-g\theta^*\|^2}u^\top\prt{g^\top (y-g\theta^*)(y-g\theta^*)^\top g-gg^\top}u \nonumber \\
	& = C\sum_{S\in E}\sum_{g\in S}e^{-\frac{1}{2}\|y-\bar S\theta^*\|^2}u^\top\prt{g^\top (y-\bar S\theta^*)(y-\bar S\theta^*)^\top g-I}u \nonumber \\
	& = C\sum_{S\in E}e^{-\frac{1}{2}\|y-\bar S\theta^*\|^2}\sum_{g\in S}\prt{u^\top g^\top (y-\bar S\theta^*)(y-\bar S\theta^*)^\top gu-\|u\|^2},
\end{align*} 
hence, for all $y\in\R^d$, it must hold that
\begin{equation} \label{mainthm11}
	\sum_{S\in E}e^{-\frac{1}{2}\|y-\bar S\theta^*\|^2}\sum_{g\in S}\prt{u^\top g^\top (y-\bar S\theta^*)(y-\bar S\theta^*)^\top gu-\|u\|^2}=0.
\end{equation}
Note that for all $S\in E$ and $g\in S$, $\bar S^\top gu=\bar H^\top g^\top gu=\bar H^\top u=\bar Hu=0$, where we used the facts that $\bar S=g\bar H$ and that $\bar H$ is symmetric, by Lemma \ref{LemmaProj}. Thus, also noting that for all $S\in E$, $\|\bar S\theta^*\|^2=\|\theta^*\|^2$, \eqref{mainthm11} yields
\begin{equation*} 
	\sum_{S\in E}e^{y^\top\bar S\theta^*}\sum_{g\in S}\prt{u^\top g^\top yy^\top gu-\|u\|^2}=0, 
\end{equation*}
for all $y\in\R^d$. In particular, for $y=0$, this directly yields that $\|u\|^2=0$, i.e., $u=0$.

\end{proof}

As a second important consequence of Lemma \ref{LemmaProj}, the following corollary holds.
\begin{corollary} \label{CorollaryDefinite}
	The Fisher information $I(\theta^*)$ is definite if and only if all the modes $g\theta^*, g\in G$, of $\PP_{\theta^*}$ are pairwise distinct.
\end{corollary}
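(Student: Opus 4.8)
The plan is to characterize when $I(\theta^*)$ is definite using Theorem \ref{maintheorem}. By that theorem, $I(\theta^*)$ is definite (i.e.\ has trivial null space) if and only if the only vector $u\in\R^d$ with $\bar H u=0$ is $u=0$, which means precisely that $\bar H$ has trivial null space. By Lemma \ref{LemmaProj}(i), $\bar H$ is the orthogonal projection onto the fixed-point subspace $\{u\in\R^d: hu=u,\ \forall h\in H\}$, so $\bar H$ has trivial null space if and only if that fixed-point subspace is all of $\R^d$, equivalently if and only if $H=\{I\}$, the trivial stabilizer.

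\emph{The main translation step} is then to connect the condition $H=\{I\}$ to the geometric statement that all modes $g\theta^*$, $g\in G$, are pairwise distinct. Recall that $H$ is the stabilizer of $\theta^*$ and that $E=G/H$ partitions $G$ according to the equivalence $g\sim g'\iff g\theta^*=g'\theta^*$, with each class of cardinality $|H|$. First I would argue the forward direction: if all the modes are pairwise distinct, then each equivalence class of $\sim$ is a singleton, so $|H|=1$, giving $H=\{I\}$ and hence $I(\theta^*)$ definite. Conversely, if two modes coincide, say $g\theta^*=g'\theta^*$ with $g\neq g'$, then $g^{-1}g'\in H$ is a nontrivial element, so $H\neq\{I\}$ and $I(\theta^*)$ is not definite.

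Putting these together: $I(\theta^*)$ is definite $\iff \bar H$ has trivial null space $\iff H=\{I\}\iff$ all modes $g\theta^*$ are pairwise distinct. I do not expect any genuine obstacle here, since every ingredient is already available; the only point requiring slight care is noting that $\bar H$ having trivial null space is equivalent to $\bar H$ being the identity, which follows from $\bar H$ being a projection whose range is the whole space exactly when it fixes every vector, i.e.\ when $H$ acts trivially on $\R^d$. The argument is essentially a dictionary translation of the algebraic null-space description of Theorem \ref{maintheorem} into the geometric language of coinciding modes.
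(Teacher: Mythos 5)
Your proof is correct, and it follows the paper's overall skeleton: both reduce the question, via Theorem \ref{maintheorem} and Lemma \ref{LemmaProj}(i), to deciding when the orthogonal projection $\bar H$ is invertible, and both finish by translating $H=\{I\}$ into pairwise distinctness of the modes. Where you diverge is the middle step. The paper proves invertibility of $\bar H$ $\iff$ $H=\{I\}$ by a trace computation: $\Tr(\bar H)=\frac{1}{|H|}\sum_{g\in H}\Tr(g)\leq d$, with equality forcing $\Tr(g)=d$, hence $g=I$, for every $g\in H$ (using that an isometry has trace at most $d$ with equality only at the identity). You instead argue directly that an orthogonal projection has trivial kernel exactly when its range is all of $\R^d$, i.e.\ when the fixed-point subspace $\{u: hu=u,\ \forall h\in H\}$ equals $\R^d$, i.e.\ when every $h\in H$ is the identity map. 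Your route is marginally more elementary --- it needs nothing beyond the projection property, while the paper imports the small extra fact about traces of isometries --- but the paper's trace identity buys something yours does not: $\mathrm{rank}(\bar H)=\Tr(\bar H)=\frac{1}{|H|}\sum_{g\in H}\Tr(g)$ computes, in general, the dimension of the fast-rate component as an average of traces over the stabilizer, whereas your argument only settles the binary invertibility question. On the final translation you are actually more explicit than the paper: you spell out via the coset structure that coinciding modes $g\theta^*=g'\theta^*$ with $g\neq g'$ produce the nontrivial stabilizer element $g^{-1}g'$, and conversely, a step the paper compresses into a single ``i.e.''
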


\begin{proof}
	By Theorem \ref{maintheorem} and the first part of Lemma \ref{LemmaProj}, $I(\theta^*)$ is definite $\iff$ the projection $\bar H$ is invertible $\iff$ its rank is equal $d$ $\iff$ its trace is equal to $d$. Since the trace of any isometry is at most $d$, $\Tr(\bar H)=\frac{1}{|H|}\sum_{g\in H}\Tr(g)\leq d$ with equality if and only if $\Tr(g)=d$ for all $g\in H$ $\iff$ $g=I$ for all $g\in H$ $\iff$ $H=\{I\}$, i.e., $g\theta^*\neq \theta^*$, for all $g\in G\setminus\{I\}$, i.e., the modes of $\Phi$ are pairwise distinct.
\end{proof}

Geometrically, assuming that all the $g\theta^*, g\in G$ are pairwise distinct can be interpreted as assuming that $\theta^*$ exhibits no symmetries or rotational invariances that are encoded in $G$: For instance, if $G$ contains a reflexion around some subspace, saying that the centers of $\PP_{\theta^*}$ are pairwise distinct implies that $\theta^*$ can not be symmetric with respect to that subspace.

\begin{remark}
	The operator $\bar H$ has the following geometric interpretation, in the likelihood landscape of the model. For $\theta\in\R^d$, let $\Theta(\theta)=\{g\theta:g\in G\}$ be the identified set associated with $\theta$, $\textsf{deg}(\theta)=\left|\Theta(\theta)\right|$ its cardinality, which we call \textit{degree of identifiability} of $\theta$ and $H(\theta)=\{g\in G:g\theta=\theta\}$. It is easy to see that $\textsf{deg}(\theta)=|G|/|H(\theta)|$: The degree of identifiability of $\theta$ is always a divider of $|G|$. For instance, $\textsf{deg}(\theta)=1$ means that 
$\theta$ is uniquely identified, in the sense that for all $\theta'\in\R^d$, $\PP_{\theta'}=\PP_{\theta}\Rightarrow \theta'=\theta$, and the larger $\textsf{deg}(\theta)$ is, the less $\theta$ is identifiable in the model.	Now, let $\mathcal U=\{u\in\R^d:\textsf{deg}(\theta^*+tu)>\textsf{deg}(\theta^*) \mbox{ when } |t| \mbox{ is small enough}\}$: This is the set of directions in which a small perturbation of $\theta^*$ increases the degree of identifiability. Now, note that for all $u\in\R^d$ and $t\in\R$ with small enough $|t|$, $H(\theta^*+tu)$ is a subgroup of $H(\theta^*)$. If $u=0$, this is trivial. If $u\neq 0$, let $|t|<(2\|u\|)^{-1}\min_{g\in G\setminus H(\theta^*)}\|g\theta^*-\theta^*\|$; Then, for all $g\in G\setminus H(\theta^*)$, $\|g(\theta^*+tu)-(\theta^*+tu)\|\geq \|g\theta^*-\theta^*\|-|t|\|gu-u\|\geq \|g\theta^*-\theta^*\|-2|t|\|u\|>0$, implying $g\notin H(\theta^*+tu)$. Therefore, $\mathcal U$ is the set of directions $u\in\R^d$ such that if $|t|$ is small enough, $H(\theta^*+tu)$ is a strict subgroup of $H(\theta^*)$. Geometrically, this means that $\mathcal U$ is the set of directions $u\in\R^d$ that pushes away the colliding modes of the log-likelihood: If we denote by $\Psi_{\theta}(\cdot)=\E_{\theta}[\log L(Y,\cdot)]$, then some of the colliding modes of $\Psi_{\theta^*}(\cdot)$ (i.e., the $h\theta^*, h\in H$) become distinct modes for $\Psi_{\theta^*+tu}(\cdot)$, for small enough $|t|$, while no other modes merge. Now, we can rewrite $\mathcal U=\{u\in\R^d:\mbox{ if } |t| \mbox{ is small enough}, \exists h\in H, h(\theta^*+tu)\neq \theta^*+tu\}=\{u\in\R^d: \exists h\in H, hu\neq u\}=\R^d\setminus \textsf{im}(\bar H)$: This is the complement of the range of $\bar H$. 
\end{remark}


Applied to Gaussian mixtures with group invariance, Lemma \ref{MainLemma} also yields the following important corollary.

\begin{corollary} \label{CorollaryTaylor}
	Let $\theta\in\R^d$ and $g_0\in G$ such that $\|g_0 \theta-\theta^*\|=\min_{g\in G}\|g\theta-\theta^*\|$. Write $g_0\theta-\theta^*=v+w$, where $v,w\in\R^d$ satisfy $\bar Hv=v$ and $\bar Hw=0$. Then, there exists a positive constant $C$ such that if $\|v\|$ and $\|w\|$ are small enough,
	\begin{equation*}
		\Psi(\theta)-\Psi(\theta^*) \leq -C\prt{\|v\|^2+\|w\|^4}.
	\end{equation*}
\end{corollary}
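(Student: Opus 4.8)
The plan is to pass to the closest representative and Taylor-expand the population log-likelihood around $\theta^*$. Since $\PP_\theta=\PP_{g\theta}$ gives $\Psi(g\theta)=\Psi(\theta)$ for every $g\in G$, we have $\Psi(\theta)=\Psi(g_0\theta)=\Psi(\theta^*+h)$ with $h=g_0\theta-\theta^*=v+w$; because $\bar H$ is an orthogonal projection (Lemma \ref{LemmaProj}(i)) with image $\{u:\bar Hu=u\}$ and kernel $\{u:\bar Hu=0\}$, the components $v$ and $w$ are orthogonal. Writing $\phi(s)=\Psi(\theta^*+s)$ and expanding to fourth order (licit under the hypotheses of Lemma \ref{MainLemma}), Lemma \ref{MainLemma} yields $\diff\phi(0)=0$ and $\diff^2\phi(0)(v,v)=-v^\top I(\theta^*)v$, while $\diff^2\phi(0)(w,\cdot)=0$ and $\diff^3\phi(0)(w,w,w)=0$ since $w$ lies in the null space of $I(\theta^*)$ (Theorem \ref{maintheorem}). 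As $v$ is orthogonal to that null space, $v^\top I(\theta^*)v\ge\lambda\|v\|^2$ for the smallest positive eigenvalue $\lambda$. Completing the square in $v$ (the quadratic form is non-degenerate on $\mathrm{im}\,\bar H$), I would rewrite the fourth-order Taylor polynomial as
\begin{equation*}
\phi(v+w)-\phi(0)=-\tfrac12\bigl(v-v^*(w)\bigr)^\top I(\theta^*)\bigl(v-v^*(w)\bigr)+P_4(w)+o\!\left(\|v\|^2+\|w\|^4\right),
\end{equation*}
where $v^*(w)$ is the (degree-two in $w$) minimizer and $P_4(w)=\frac1{24}\diff^4\phi(0)(w,w,w,w)+\frac18\,\beta(w)^\top \bigl(I(\theta^*)|_{\mathrm{im}\,\bar H}\bigr)^{-1}\beta(w)$ is the profiled quartic, with $\beta(w)$ representing the cubic cross term $\diff^3\phi(0)(w,w,\cdot)$ on $\mathrm{im}\,\bar H$. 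The whole statement reduces to showing $P_4(w)\le-\mu\|w\|^4$ for some $\mu>0$.

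The cubic cross term is tamed by a Bartlett-type identity. Differentiating the second Bartlett identity $\E_\theta[\partial^2_\theta\log L(w,w)]=-\E_\theta[(\partial_\theta\log L(w))^2]$ in the direction $v$ at $\theta^*$ and using that the directional score $s_w=w^\top\frac{\partial\log L}{\partial\theta}(Y,\theta^*)$ vanishes identically for $w$ in the null space (established in the proof of Theorem \ref{maintheorem}), every term on the right-hand side carries a factor $s_w$ and vanishes, while the two contributions from the left-hand side combine into $\diff^3\Psi(\theta^*)(v,w,w)+\cov_{\theta^*}(t_w,s_v)$, leaving
\begin{equation*}
\diff^3\Psi(\theta^*)(v,w,w)=-\cov_{\theta^*}\!\left(t_w,\,s_v\right),\qquad t_w:=w^\top\tfrac{\partial^2\log L}{\partial\theta\partial\theta^\top}(Y,\theta^*)w,\quad s_v:=v^\top\tfrac{\partial\log L}{\partial\theta}(Y,\theta^*).
\end{equation*}
Since $s_w\equiv0$, the ``second-order score'' $t_w$ coincides with the quantity $Z_w=\frac1{L(Y,\theta^*)}w^\top\frac{\partial^2 L}{\partial\theta\partial\theta^\top}(Y,\theta^*)w$ of Lemma \ref{MainLemma}(iii), so $\diff^4\Psi(\theta^*)(w,w,w,w)=-3\Var_{\theta^*}(t_w)$. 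Because $I(\theta^*)$ is the covariance of the score and the scores $s_v$ span a subspace $\mathcal S\subseteq L^2(\PP_{\theta^*})$, the term $\beta(w)^\top(I(\theta^*)|_{\mathrm{im}\,\bar H})^{-1}\beta(w)$ equals the squared $L^2$-norm of the projection of $t_w$ onto $\mathcal S$. Substituting gives $P_4(w)=-\frac18\bigl\|t_w-\Pi_{\mathcal S}t_w\bigr\|_{L^2(\PP_{\theta^*})}^2\le0$, which reproves $\Psi\le\Psi(\theta^*)$ and, crucially, isolates the obstruction to strictness.

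The hard part is therefore the strict negativity of $P_4$: for every $w\neq0$ in the null space, the second-order score $t_w$ must fail to lie in the span $\mathcal S$ of the first-order scores. Theorem \ref{maintheorem} already gives $\Var_{\theta^*}(t_w)>0$, so $t_w\not\equiv0$; what remains is that its $\mathcal S$-orthogonal component is nonzero. I would extract this from the explicit mixture structure: grouping the centres by the classes $S\in E$ and using $\sum_{g\in S}gw=0$ for $w$ in the null space (a direct consequence of Lemma \ref{LemmaProj}), one sees that $t_w(y)$ is, class by class, a genuine quadratic form in $y$ through the terms $\sum_{g\in S}(y^\top gw)^2$, whereas each $s_v(y)$ is a posterior-weighted \emph{affine} function of $y$. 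Letting $y\to\infty$ along a direction that drives a single posterior class weight to $1$, a relation $t_w=\Pi_{\mathcal S}t_w\in\mathcal S$ would force an unbounded quadratic to equal a bounded quantity, a contradiction for generic directions (here the pairwise distinctness of the class means $\bar S\theta^*$ from Lemma \ref{LemmaProj} is used). Hence $\|t_w-\Pi_{\mathcal S}t_w\|_{L^2}>0$, and by homogeneity and compactness on the unit sphere of the null space, $P_4(w)\le-\mu\|w\|^4$. This non-degeneracy — genuinely model-specific and not a formal consequence of the earlier statements — is the real crux.

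Finally I would assemble the bound by a short case split on the size of $\|v\|$ relative to $\|v^*(w)\|=O(\|w\|^2)$. When $\|v\|\le2\|v^*(w)\|$ the squared term is nonpositive and $\|v\|^2=O(\|w\|^4)$, so $P_4(w)\le-\mu\|w\|^4$ already dominates $\|v\|^2+\|w\|^4$; when $\|v\|>2\|v^*(w)\|$ one has $\|v-v^*(w)\|\ge\|v\|/2$, so the quadratic term supplies $-\frac{\lambda}{8}\|v\|^2$ and $P_4(w)\le-\mu\|w\|^4$ the quartic part. In both regimes the remainder is $o(\|v\|^2+\|w\|^4)$: every leftover monomial has weighted degree at least five once $v$ is counted with weight two and $w$ with weight one, and such monomials are absorbed by Young's inequality for $\|v\|,\|w\|$ small. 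This yields $\Psi(\theta)-\Psi(\theta^*)\le-C(\|v\|^2+\|w\|^4)$ for a suitable $C>0$.
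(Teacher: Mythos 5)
Your proposal is correct, and its skeleton coincides with the paper's: a fourth-order Taylor expansion in $u=v+w$, the identities $\diff^3\Psi(\theta^*)(v,w,w)=-\cov_{\theta^*}(s_v,t_w)$ and $\diff^4\Psi(\theta^*)(w,w,w,w)=-3\Var_{\theta^*}(t_w)$ (your Bartlett-identity derivation is, after unwinding, the same computation as the paper's Lemma \ref{higherorderid}, obtained there by differentiating the normalization $\int L\,\diff\mu=1$ in the direction $v$ and twice in $w$, and using that $s_w$ vanishes pointwise), and a model-specific non-degeneracy proved by ray asymptotics in the dominance cones $C_0$ where one posterior class weight tends to $1$ --- your quadratic-versus-affine growth contradiction is exactly Steps 1--3 of the paper's proof of Lemma \ref{IntermediateLemma4}, and you correctly identified it as the crux that does not follow formally from the earlier statements. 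Where you genuinely diverge is the endgame. The paper keeps $v$ and $w$ coupled, writes the quartic model as $-\frac18\Var_{\theta^*}\crc{2s_v+t_w}$, and proves Lemma \ref{IntermediateLemma4} by bounding the correlation between $s_v$ and $t_w$ away from $\pm1$ uniformly over the product of unit spheres (compactness), then splitting the variance. You instead profile out $v$ by completing the square, which amounts to the observation $\min_{v}\Var_{\theta^*}\crc{2s_v+t_w}=\|t_w-\Pi_{\mathcal S}t_w\|_{L^2(\PP_{\theta^*})}^2$ with $\mathcal S$ the span of the scores $\{s_v: \bar Hv=v\}$ (the Gram matrix being $I(\theta^*)$ restricted to $\textsf{im}\,\bar H$), reduce strictness to $t_w\notin\mathcal S$ for $w\neq0$, and finish with a case split on $\|v\|$ versus $\|v^*(w)\|=O(\|w\|^2)$. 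The two quantitative lemmas are equivalent, but your packaging makes the geometric content more transparent (the slow directions are obstructed exactly by the component of the second-order score orthogonal to the first-order scores) and needs compactness only on the unit sphere of the null space rather than on the product of spheres. You also get a small economy the paper does not exploit: you never prove $\diff^4\Psi(\theta^*)(v,w,w,w)=0$ (the second claim of Lemma \ref{higherorderid}), absorbing that term instead as a weighted-degree-five monomial ($v$ of weight two, $w$ of weight one) via Young's inequality, which is legitimate since $\|v\|\|w\|^3\leq\delta\prt{\|v\|^2+\|w\|^4}$ for $\|w\|$ small; the same bookkeeping disposes of all other cross terms, just as the paper's $o(\|v\|^2)$ accounting does.
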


Note that the vectors $v$ and $w$ in Corollary \ref{CorollaryTaylor} are uniquely defined: $v=\bar H(g_0\theta-\theta^*)$ and $w=(I-\bar H)(g_0\theta-\theta^*)$.

\begin{proof}
	A Taylor expansion yields:
\begin{align}
	\Psi(\theta)-\Psi(\theta^*) & = \diff\Psi(\theta^*)(u) +\frac{1}{2}\diff^2\Psi(\theta^*)(u,u)+\frac{1}{6}\diff^3\Psi(\theta^*)(u,u,u) \nonumber \\
	& \quad \quad \quad \quad +\frac{1}{24}\diff^4\Psi(\theta^*)(u,u,u,u)+o(\|u\|^4) \nonumber \\
	& =: \textsf{I}+\textsf{II}+\textsf{III}+\textsf{IV}+o(\|v\|^2+\|w\|^4). \label{BigExp1}
\end{align}
Since $\theta^*$ is a maximum of $\Psi$ and $\Psi$ is differentiable, $\textsf{I}=0$ in \eqref{BigExp1}. For the second term, one has
\begin{align*}
	\diff^2\Psi(\theta^*)(u,u) & = u^\top \frac{\partial^2 \Psi}{\partial\theta\partial\theta^\top}(\theta^*)u \\
	& = v^\top \frac{\partial^2 \Psi}{\partial\theta\partial\theta^\top}(\theta^*)v+2v^\top \frac{\partial^2 \Psi}{\partial\theta\partial\theta^\top}(\theta^*)w+w^\top \frac{\partial^2 \Psi}{\partial\theta\partial\theta^\top}(\theta^*)w.
\end{align*}
Since $\bar Hw=0$, $w$ is in the null space of the negative semidefinite matrix $\DS \frac{\partial^2 \Psi}{\partial\theta\partial\theta^\top}(\theta^*)$, by Theorem \ref{maintheorem}, yielding $\DS \textsf{II}=v^\top \frac{\partial^2 \Psi}{\partial\theta\partial\theta^\top}(\theta^*)v=-\Var_{\theta^*}\crc{v^\top\frac{\partial\log L}{\partial\theta}(Y,\theta^*)}$. For the third term in \eqref{BigExp1}, one has
\begin{align*}
	\diff^3 \Psi(\theta^*)(u,u,u) & = \diff^3\Psi(\theta^*)(v,v,v)+3\diff^3\Psi(\theta^*)(v,v,w) \\
	& \quad +3\diff^3\Psi(\theta^*)(v,w,w)+\diff^3\Psi(\theta^*)(w,w,w).
\end{align*}
In the right hand side of the last display, the last term is zero, by Theorem \ref{maintheorem}. Moreover, the first two terms are $o(\|v\|^2)$, hence, in \eqref{BigExp1}, $\DS \textsf{III} = 3\diff^3\Psi(\theta^*)(v,w,w)+o(\|v\|^2)$. Finally, for the fourth term in \eqref{BigExp1}, write, in the same fashion as for the other terms,
\begin{align*}
	\diff^4\Psi(\theta^*)(u,u,u,u) & = 4\diff^4\Psi(\theta^*)(v,w,w,w)+\diff^4\Psi(\theta^*)(w,w,w,w)+o(\|v\|^2) \\
	& = 4\diff^4\Psi(\theta^*)(v,w,w,w)-3\Var_{\theta^*}\crc{w^\top\frac{\partial^2\log L}{\partial\theta\partial\theta^\top}(Y,\theta^*)w} \\
	& \quad \quad +o(\|v\|^2),
\end{align*}
where we used Theorem \ref{maintheorem} for the last equality.

Wrapping up, one obtains from \eqref{BigExp1} and the intermediate computations, 
\begin{align}
	\Psi(\theta)-\Psi(\theta^*) = & -\frac{1}{2}\Var_{\theta^*}\crc{v^\top\frac{\partial \log L}{\partial\theta}(Y,\theta^*)}+\frac{1}{2}\diff^3\Psi(\theta^*)(v,w,w) \nonumber \\
	& -\frac{1}{8}\Var_{\theta^*}\crc{w^\top\frac{\partial^2\log L}{\partial\theta\partial\theta^\top}(Y,\theta^*)w}+\frac{1}{6}\diff^4\Psi(\theta^*)(v,w,w,w) \\
	& +o(\|v\|^2+\|w\|^4). \label{BigExp2}
\end{align}

Now, we make use of the following result:

\begin{lemma}\label{higherorderid}
\begin{equation*}
	\diff^3\Psi(\theta^*)(v,w,w)=-\cov_{\theta^*}\prt{v^\top\frac{\partial\log L}{\partial\theta}(Y,\theta^*),w^\top\frac{\partial^2\log L}{\partial\theta\partial\theta^\top}(Y,\theta^*)w}
\end{equation*}
and
\begin{equation*}
	\diff^4\Psi(\theta^*)(v,w,w,w)=0.
\end{equation*}
\end{lemma}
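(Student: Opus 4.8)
The plan is to read off every differential of $\Psi$ as an expectation under $\PP_{\theta^*}$ of the corresponding differential of $\log L(Y,\cdot)$ (legitimate by the regularity in Lemma \ref{MainLemma}) and to reduce everything through the log-derivative calculus. Write $D_u$ for the directional derivative in $\theta$ along $u$, $s_u=u^\top\frac{\partial\log L}{\partial\theta}(\cdot,\theta^*)$ for the score, and $P_{u_1\cdots u_k}=\frac{1}{L}\partial_\theta^k L(\cdot,\theta^*)(u_1,\dots,u_k)$ for the normalized derivatives of the density. Specialized at $\theta^*$, Fa\`a di Bruno gives $D_uD_{u'}\log L=P_{uu'}-s_us_{u'}$, $D_uD_{u'}D_{u''}\log L=P_{uu'u''}-(s_uP_{u'u''}+s_{u'}P_{uu''}+s_{u''}P_{uu'})+2s_us_{u'}s_{u''}$, and the analogous fourth-order identity. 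I will use three inputs, all valid pointwise in $y$: (a) $s_w\equiv 0$, which is exactly the characterization of the null space obtained in the proof of Theorem \ref{maintheorem}; (b) $\langle v,w\rangle=0$, since $v\in\operatorname{im}\bar H$, $w\in\ker\bar H$ and $\bar H$ is an orthogonal projection (Lemma \ref{LemmaProj}(i)); and (c) $P_{vw}\equiv 0$, i.e. $\partial_\theta^2 L(\cdot,\theta^*)(v,w)=0$, which follows from $\bar Sv=gv$ and $\bar Sw=0$ (Lemma \ref{LemmaProj}(ii)--(iii)) after grouping the modes by the classes $S\in E$, exactly as in the proof of Theorem \ref{maintheorem}. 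I will also use the Bartlett identities $\E_{\theta^*}[P_{u_1\cdots u_k}]=\int\partial_\theta^k L=0$ and $\E_{\theta^*}[s_u]=0$.

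For the first identity, input (a) kills every term carrying a factor $s_w$ in $D_vD_w^2\log L$, leaving $P_{vww}-s_vP_{ww}$ at $\theta^*$; taking expectations and using $\E_{\theta^*}[P_{vww}]=0$ gives $\diff^3\Psi(\theta^*)(v,w,w)=-\E_{\theta^*}[s_vP_{ww}]$. Since $P_{ww}=w^\top\frac{\partial^2\log L}{\partial\theta\partial\theta^\top}(\cdot,\theta^*)w$ at $\theta^*$ (again by (a)) and $\E_{\theta^*}[s_v]=0$, this is precisely $-\cov_{\theta^*}\prt{s_v,\,w^\top\frac{\partial^2\log L}{\partial\theta\partial\theta^\top}(Y,\theta^*)w}$, as claimed.

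For the second identity I proceed in the same spirit: discarding all $s_w$-terms in $D_vD_w^3\log L$ leaves $P_{vwww}-s_vP_{www}-3P_{vw}P_{ww}$; input (c) removes the last term and $\E_{\theta^*}[P_{vwww}]=0$ the first, so that $\diff^4\Psi(\theta^*)(v,w,w,w)=-\E_{\theta^*}[s_vP_{www}]$. To evaluate this I represent $\PP_{\theta^*}$ through its latent component: with $Z$ uniform on $G$, $Y=g_Z\theta^*+\varepsilon$ and $\varepsilon\sim\mathcal N(0,I)$, one has $s_v(Y)=\E_{\theta^*}[\varepsilon^\top g_Zv\mid Y]$, so by the tower rule $\E_{\theta^*}[s_vP_{www}]=\E_{\theta^*}[(\varepsilon^\top g_Zv)\,P_{www}(Y)]$. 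Both $s_v$ and $P_{www}$ are invariant under the $G$-action on $\R^d$ (it permutes the modes and the isotropic noise), so using $\varepsilon^\top g_Zv=(g_Z^{-1}\varepsilon)^\top v$ and the orthogonal change of variables $\eta=g_Z^{-1}\varepsilon\sim\mathcal N(0,I)$ collapses the average over $Z$ and yields the clean formula
\begin{equation*}
	\diff^4\Psi(\theta^*)(v,w,w,w)=-\E_{\eta\sim\mathcal N(0,I)}\crc{(\eta^\top v)\,P_{www}(\theta^*+\eta)}.
\end{equation*}

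The remaining step is to show this integral vanishes, and this is where I expect the main obstacle to lie: the mixing denominator $L(\cdot,\theta^*)$ hidden in $P_{www}$ blocks any naive factorization, so the cancellation is genuinely a feature of the Gaussian structure. My plan is to exploit the reflection $R=2\bar H-I$, which fixes $v$ and sends $w$ to $-w$; since $R\in\mathcal O(d)$ preserves $\mathcal N(0,I)$, fixes $\theta^*$, and satisfies $(R\eta)^\top v=\eta^\top v$, the integral is forced to $0$ as soon as $s_v\circ R=s_v$ and $P_{www}\circ R=-P_{www}$. Using Lemma \ref{LemmaProj} one checks class by class that $R$ reverses the directions $gw$ while preserving the modes $\bar S\theta^*$, which gives exactly these two parities. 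The delicate point is that they require $R$ to preserve $\PP_{\theta^*}$, i.e. to permute the orbit $\{g\theta^*\}$; I will verify this from the orbit structure (it is automatic, for instance, whenever $\bar H=\frac12(I+\sigma)$ for an involution $\sigma\in H$), and in the residual configurations fall back on a direct Gaussian moment computation, writing $P_{www}$ via $\bar Sw=0$ in terms of the odd cubic forms $\zeta\mapsto\frac1{|H|}\sum_{g\in S}(\zeta^\top gw)^3$ and reading off the vanishing from the orthogonality of odd Hermite polynomials against the linear factor $\eta^\top v$.
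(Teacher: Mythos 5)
Your proof of the first identity is correct and is essentially the paper's own argument. The paper also works from the differentiated normalization $\int L\,\diff\mu=1$, establishes the same two pointwise inputs (your (a): $s_w\equiv0$, from the null-space characterization; your (c): $\pl{2}(v,w)\equiv0$ at $\theta^*$, proved exactly as you indicate, via $\sum_{g\in S}gw=|S|\bar Sw=0$ together with $v^\top w=0$), and concludes $\diff^3\Psi(\theta^*)(v,w,w)=-\E_{\theta^*}\crc{s_v\,\pl{2}(w,w)}=-\cov_{\theta^*}(\cdots)$ from $\E_{\theta^*}[s_v]=0$; your Fa\`a di Bruno bookkeeping in the $P$-variables is the same computation. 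Your fourth-order reduction $\diff^4\Psi(\theta^*)(v,w,w,w)=-\E_{\theta^*}[s_vP_{www}]$ is also correct and coincides with what the paper's fourth differentiated identity yields once the $s_w$- and $P_{vw}$-terms are discarded, and your further rewriting as $-\E_{\eta\sim\mathcal N(0,I)}\crc{(\eta^\top v)\,P_{www}(\theta^*+\eta)}$ (tower property plus $G$-invariance of $s_v$ and $P_{www}$, with the orthogonal substitution $\eta=g_Z^{-1}\varepsilon$) is valid and is a genuinely nice simplification not in the paper.

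The genuine gap is the final step, which you yourself leave conditional, and it does not close. First, the reflection $R=2\bar H-I$ indeed fixes $v$, $\theta^*$ and $\mathcal N(0,I)$, but it need not permute the orbit $\{g\theta^*\}$: take $G$ the rotation group of the cube and $\theta^*=(1,1,1)^\top$, so that $H\cong C_3$ is generated by the cyclic coordinate shift and $\bar H=\frac13\mathbf{1}\mathbf{1}^\top$; then $R$ is the half-turn about the main diagonal and maps the mode $(1,1,-1)^\top$ to $\frac13(-1,-1,5)^\top$, which is not a mode, so $R$ does not preserve $\PP_{\theta^*}$ and the parity substitution is unavailable. Second, in the one case where you note the hypothesis holds automatically ($R=\sigma\in H$ an involution), the $G$-invariance you already established forces the \emph{even} parity $P_{www}\circ R=+P_{www}$, so the odd parity you want can only coexist with it because $P_{www}\equiv0$ there (each class satisfies $\{gw:g\in S\}=-\{gw:g\in S\}$, so the cubes cancel pointwise): the reflection argument is vacuous precisely where it applies. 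Third, the fallback via ``orthogonality of odd Hermite polynomials against $\eta^\top v$'' is exactly the naive factorization you ruled out at the start: the denominator $N(\theta^*+\eta)=\sum_g e^{-\frac12\|\theta^*+\eta-g\theta^*\|^2}$ couples the $\textsf{im}(\bar H)$- and $\ker(\bar H)$-coordinates of $\eta$ whenever some mode $\bar S\theta^*$ has a nonzero component in $\ker \bar H$ (again the cube example), so $\eta^\top v$ is not independent of $P_{www}(\theta^*+\eta)$ and no Hermite orthogonality can be ``read off''. The second identity is therefore not proved.

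For comparison, the paper closes this step very differently: it asserts the \emph{pointwise} identity $\pl{3}(w,w,w)=0$ for all $y$ (equivalently $P_{www}\equiv0$, since $s_w\equiv0$), on the grounds that ``$\bar Sw$ factorizes'' in each term, which makes the fourth identity immediate. You should be aware that this factorization kills the linear terms but not the cubic ones: $\sum_{g\in S}\bigl((y-\bar S\theta^*)^\top gw\bigr)^3$ does not factor through $\bar Sw$, and for $H$ acting on $\ker\bar H$ with an element of order at least $3$ it is a nonzero cubic harmonic (for $H=C_3$ rotating a plane and $w=e_1$ it equals $\frac34 z_1(z_1^2-3z_2^2)$ with $z=y-\bar S\theta^*$). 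So your instinct that the cancellation $\E_{\theta^*}[s_vP_{www}]=0$ is not a pointwise phenomenon and is ``where the main obstacle lies'' is well founded — the paper's one-line justification is only valid in involutive situations — but your proposal does not supply a correct general argument in its place, and this is exactly the piece that a complete proof of $\diff^4\Psi(\theta^*)(v,w,w,w)=0$ would need.
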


Thus, \eqref{BigExp2} implies that 
\begin{align*}
	\Psi(\theta)-\Psi(\theta^*) = & -\frac{1}{8}\Var_{\theta^*}\crc{2v^\top\frac{\partial\log L}{\partial\theta}(Y,\theta^*)+w^\top\frac{\partial^2\log L}{\partial\theta\partial\theta^\top}(Y,\theta^*)w} \\
	& +o(\|v\|^2+\|w\|^4).
\end{align*}

Finally, the following lemma, which we prove in the appendix, allows to conclude this proof.

\begin{lemma} \label{IntermediateLemma4}
	There exists a constant $C>0$ that does not depend on $v$ and $w$ such that 
	\begin{equation*}
		\Var_{\theta^*}\crc{2v^\top\frac{\partial\log L}{\partial\theta}(Y,\theta^*)+w^\top\frac{\partial^2\log L}{\partial\theta\partial\theta^\top}(Y,\theta^*)w} \geq C\prt{\|v\|^2+\|w\|^4}.
	\end{equation*}
\end{lemma}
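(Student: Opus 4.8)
The plan is to combine an anisotropic homogeneity-and-compactness reduction with a non-degeneracy argument exploiting the exponential structure of the mixture. Throughout I use that $v\in\textsf{im}(\bar H)$ and $w\in\ker(\bar H)$, as holds in Corollary~\ref{CorollaryTaylor}; this is essential, since the inequality is false in general (for $0\neq v\in\ker(\bar H)$, $w=0$ the left-hand side vanishes while $\|v\|^2>0$). Write
$$X(v,w)=2v^\top\tfrac{\partial\log L}{\partial\theta}(Y,\theta^*)+w^\top\tfrac{\partial^2\log L}{\partial\theta\partial\theta^\top}(Y,\theta^*)w,\qquad f(v,w)=\Var_{\theta^*}[X(v,w)].$$
Since $X$ is linear in $v$ and quadratic in $w$, with coefficients having finite moments under $\PP_{\theta^*}$ (guaranteed by the integrability hypotheses of Lemma~\ref{MainLemma}), $f$ is a polynomial, hence continuous, in $(v,w)$. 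The structural observation is the anisotropic homogeneity $X(t^2v,tw)=t^2X(v,w)$, whence $f(t^2v,tw)=t^4f(v,w)$; crucially, the target $\|v\|^2+\|w\|^4$ obeys the same scaling.

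First I would reduce to a pointwise positivity statement. The set $\Sigma=\{(v,w)\in\textsf{im}(\bar H)\times\ker(\bar H):\|v\|^2+\|w\|^4=1\}$ is compact. Granting $f(v,w)>0$ for all $(v,w)\neq0$, compactness gives $C:=\min_\Sigma f>0$; then for arbitrary $(v,w)\neq0$ the scalar $t=(\|v\|^2+\|w\|^4)^{-1/4}$ puts $(t^2v,tw)\in\Sigma$, so $f(v,w)=t^{-4}f(t^2v,tw)=(\|v\|^2+\|w\|^4)\,f(t^2v,tw)\geq C(\|v\|^2+\|w\|^4)$, which is the claim. The reason I route through positivity plus compactness, rather than splitting $\Var_{\theta^*}[X]$ into the variances of $A:=v^\top\frac{\partial\log L}{\partial\theta}$ and $B:=w^\top\frac{\partial^2\log L}{\partial\theta\partial\theta^\top}w$ and their covariance, is that Cauchy--Schwarz only yields $\Var_{\theta^*}[2A+B]\geq(2\sqrt{\Var_{\theta^*}A}-\sqrt{\Var_{\theta^*}B})^2\geq0$: the cross term $\cov_{\theta^*}(A,B)$ can a priori cancel the two positive pieces, so the two rates cannot be decoupled.

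The heart of the proof is therefore $f(v,w)>0$ for $(v,w)\neq0$. Since $\E_{\theta^*}[X]=0$ (the score has mean zero, and $\E_{\theta^*}[\partial^2_\theta\log L]=\diff^2\Psi(\theta^*)=-I(\theta^*)$ annihilates $w\in\ker I(\theta^*)$), $f(v,w)=0$ would force $X(v,w)=0$ $\PP_{\theta^*}$-a.s.; as $\PP_{\theta^*}$ has an everywhere positive density and $y\mapsto X(v,w)$ is continuous, this means $X(v,w)(y)=0$ for all $y\in\R^d$. So it suffices to show this identity implies $v=w=0$. Because $w\in\ker(\bar H)$, the proof of Theorem~\ref{maintheorem} gives $w^\top\frac{\partial\log L}{\partial\theta}(y,\theta^*)=0$ for all $y$, hence $w^\top\frac{\partial^2\log L}{\partial\theta\partial\theta^\top}w=\frac1L\,w^\top\frac{\partial^2 L}{\partial\theta\partial\theta^\top}w$. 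Multiplying $X(v,w)\equiv0$ by the positive denominator, grouping the sum over $G$ by the classes $S\in E$ (using $\bar Sv=g_Sv$ for a representative $g_S\in S$, by Lemma~\ref{LemmaProj}), and factoring out the common Gaussian factor via $\|\bar S\theta^*\|=\|\theta^*\|$, the identity becomes
\begin{equation*}
  \sum_{S\in E}e^{y^\top\bar S\theta^*}\Big[\,z_S^\top M_S z_S+2|S|(g_Sv)^\top z_S-|S|\,\|w\|^2\,\Big]=0,\qquad\forall\,y\in\R^d,
\end{equation*}
where $z_S=y-\bar S\theta^*$ and $M_S=\sum_{g\in S}gww^\top g^\top$.

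Finally I would invoke that the vectors $\bar S\theta^*$ are pairwise distinct (Lemma~\ref{LemmaProj}) and that the functions $y\mapsto e^{y^\top\mu}$ with distinct $\mu$ are linearly independent over the ring of polynomials: choosing a direction $\xi$ separating the values $\xi^\top\bar S\theta^*$ and letting $y$ run to infinity along $\xi$, the dominant exponential forces its polynomial coefficient to vanish, and an induction peels off the remaining terms --- the same limiting device as in the proof of Theorem~\ref{maintheorem}, now applied to degree-two polynomial coefficients. Hence each bracketed polynomial in $z_S$ vanishes identically on $\R^d$; reading off its quadratic, linear and constant parts yields $M_S=0$, $g_Sv=0$ and $\|w\|^2=0$, i.e.\ $w=0$ and (as $g_S$ is invertible) $v=0$. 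This establishes pointwise positivity and closes the argument. The main obstacle is precisely this non-degeneracy step: controlling the sign of the cross-covariance is hopeless by scalar inequalities, and one genuinely needs the exponential-polynomial rigidity to exclude exact cancellation before compactness can deliver a uniform constant $C$.
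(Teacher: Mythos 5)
Your proof is correct, and its heart coincides with the paper's: both arguments rest on the same non-degeneracy fact, namely that $\Var_{\theta^*}\crc{2v^\top\frac{\partial\log L}{\partial\theta}(Y,\theta^*)+w^\top\frac{\partial^2\log L}{\partial\theta\partial\theta^\top}(Y,\theta^*)w}=0$ with $\bar Hv=v$, $\bar Hw=0$ forces a pointwise identity in $y$, which, after multiplying by the positive density, grouping the sum over $G$ into the cosets $S\in E$ and factoring out $e^{y^\top\bar S\theta^*}$ via $\|\bar S\theta^*\|=\|\theta^*\|$, is destroyed by the same exponential-versus-polynomial separation device as in Theorem \ref{maintheorem}, yielding $v=w=0$. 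Your per-class bracket $z_S^\top M_S z_S+2|S|(g_Sv)^\top z_S-|S|\|w\|^2$ is exactly the content of the paper's Steps 1--3, up to the paper's cosmetic choice of first evaluating at $y=0$ to extract $v^\top\theta^*=\|w\|^2$ before peeling the exponentials. Where you genuinely diverge is the quantitative reduction. The paper deduces from non-degeneracy that the correlation between the two terms is bounded by some $c<1$ uniformly over $(F\cap\Sp)\times(F^\perp\cap\Sp)$ (compactness plus scale invariance of the correlation), obtains $\Var_{\theta^*}[\text{sum}]\geq(1-c)(\Var_1+\Var_2)$, and then bounds the individual variances below by $c_1\|v\|^2$ and $c_2\|w\|^4$ using Theorem \ref{maintheorem} (including its fourth-order part), continuity and homogeneity. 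Your anisotropic scaling $f(t^2v,tw)=t^4f(v,w)$, matched to the degree-$4$ homogeneity of the target $\|v\|^2+\|w\|^4$, compresses all of this into a single compactness argument on $\Sigma=\{\|v\|^2+\|w\|^4=1\}$: it is cleaner, dispenses with the correlation/Cauchy--Schwarz bookkeeping entirely, and automatically covers the axis cases $v=0$ or $w=0$, which your unified identity argument handles in one stroke whereas the paper treats them through the separate constants $c_1,c_2$. One small remark: your observation that a naive Cauchy--Schwarz split can be cancelled by the cross term is accurate, but it does not apply to the paper's actual argument, where the strict bound $c<1$ is precisely what rescues the decoupling; both routes are valid, and yours trades the paper's explicit variance decomposition for a tighter structural use of the common scaling.
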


\end{proof}

\section{Statistical rates} \label{Sec:Statistics}

Now that we have understood the likelihood geometry of the statistical model, we are in a position to state some statistical results about the MLE $\hat \theta_n$. Recall that the MLE maximizes $\hat\Psi_n(\theta)$, which was defined in \eqref{loglikelihood}. As we have already explained in the introduction, we measure the performance of $\hat\theta_n$ by $\DS \rho(\hat\theta_n,\theta^*)=\min_{g\in G}\|g\hat\theta_n-\theta^*\|$.
The first result is that $\hat\theta_n$ is consistent.

\begin{theorem} \label{thm:consistency}
	For all $\theta^*\in\R^d$, $\rho(\hat\theta_n,\theta^*) \underset{n\to\infty}{\longrightarrow} 0$ in $\PP_{\theta^*}$-probability.
\end{theorem}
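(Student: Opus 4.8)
The plan is to run the classical Wald-type argmax consistency argument, adapted to the fact that $\theta^*$ is identified only up to the orbit $\Theta(\theta^*)=\{g\theta^*:g\in G\}$, so that distances are measured through the pseudometric $\rho$. The argument rests on three ingredients: (a) a well-separation property of the population criterion $\Psi$ around the orbit of $\theta^*$; (b) coercivity of $\Psi$ and of $\hat\Psi_n$ at infinity, which confines the maximizer to a fixed ball; and (c) a uniform law of large numbers $\sup_{\|\theta\|\le R}|\hat\Psi_n(\theta)-\Psi(\theta)|\to 0$ on every ball.

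First I would establish identifiability. Writing $\Psi(\theta^*)-\Psi(\theta)=\E_{\theta^*}[\log(L(Y,\theta^*)/L(Y,\theta))]$, this is the Kullback--Leibler divergence $\mathrm{KL}(\PP_{\theta^*}\,\|\,\PP_\theta)\ge 0$, with equality exactly when $\PP_\theta=\PP_{\theta^*}$. Because finite Gaussian mixtures with identity covariance are identifiable up to relabeling of their components, $\PP_\theta=\PP_{\theta^*}$ holds iff the two families of means coincide, i.e. iff $\theta\in\Theta(\theta^*)$, that is iff $\rho(\theta,\theta^*)=0$. Thus $\theta^*$ is the unique maximizer of $\Psi$ up to the group action. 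By continuity of $\Psi$ and compactness, for every $\epsilon>0$ and every $R>0$ there is $\delta>0$ with $\Psi(\theta)\le\Psi(\theta^*)-\delta$ on the compact set $\{\theta:\rho(\theta,\theta^*)\ge\epsilon,\ \|\theta\|\le R\}$.

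Next I would handle the non-compactness of $\R^d$. Since each $g\in G$ is an isometry, $\|g\theta\|=\|\theta\|$, so $\|y-g\theta\|\ge \|\theta\|-\|y\|$ and hence $\log L(y,\theta)\le -\tfrac{d}{2}\log(2\pi)-\tfrac12(\|\theta\|-\|y\|)_+^2$. Averaging over the sample gives $\hat\Psi_n(\theta)\le C-\tfrac12(\|\theta\|-n^{-1}\sum_i\|Y_i\|)_+^2$; since $n^{-1}\sum_i\|Y_i\|$ is bounded in probability by the law of large numbers, there is an $R$ (not depending on $n$) such that $\hat\Psi_n(\theta)<\hat\Psi_n(\theta^*)$ for all $\|\theta\|>R$ with probability tending to one. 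Consequently the MLE exists and lies in the ball $B(0,R)$ with probability tending to one. On this fixed ball, $\theta\mapsto\log L(y,\theta)$ is continuous and dominated uniformly by a function quadratic in $\|y\|$, hence $\PP_{\theta^*}$-integrable, so a bracketing/Glivenko--Cantelli argument yields the uniform convergence in (c).

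Finally I would combine the pieces: if $\hat\theta_n$ satisfied $\rho(\hat\theta_n,\theta^*)\ge\epsilon$ while remaining in $B(0,R)$, then using (c) and (a), $\hat\Psi_n(\hat\theta_n)\le\Psi(\hat\theta_n)+o_P(1)\le\Psi(\theta^*)-\delta+o_P(1)$, whereas $\hat\Psi_n(\hat\theta_n)\ge\hat\Psi_n(\theta^*)=\Psi(\theta^*)+o_P(1)$; for large $n$ this is a contradiction, so $\PP_{\theta^*}(\rho(\hat\theta_n,\theta^*)\ge\epsilon)\to0$. The main obstacle is the interplay between non-compactness and group invariance: one must rule out mass escaping to infinity uniformly in $n$ (the coercivity estimate above) and, simultaneously, phrase the entire separation and contradiction argument through the pseudometric $\rho$ rather than the Euclidean distance, since the criterion $\Psi$ is flat along the whole orbit of $\theta^*$.
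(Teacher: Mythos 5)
Your proof is correct and follows essentially the same route as the paper: a coercivity estimate at infinity (yours via $\log L(y,\theta)\leq -\frac{d}{2}\log(2\pi)-\frac{1}{2}(\|\theta\|-\|y\|)_+^2$ and the law of large numbers; the paper's via a second-moment event and a comparison of $\hat\Psi_n(\theta)$ with $\hat\Psi_n(0)$) confines the MLE to a fixed ball with probability tending to one, after which Wald-type argmax consistency on the compact part, phrased through the pseudometric $\rho$, concludes. The only difference is that you spell out the compact-set step (identifiability of the orbit via Kullback--Leibler and Gaussian-mixture identifiability, well-separation, a bracketing Glivenko--Cantelli argument, and the contradiction) that the paper delegates wholesale to \cite[Theorem 5.14]{vaart_1998}.
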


\begin{proof}

Let $C=\|\theta^*\|^2+d+1$ and consider the event $\mathcal A$ when $\DS \frac{1}{n}\sum_{i=1}^n \|Y_i\|^2\leq C$. Then, since $\|Y_1\|^2,\ldots,\|Y_n\|^2$ are i.i.d subexponential random variables with mean $C-1$, $\PP_{\theta^*}\crc{\mathcal A}\rightarrow 1$, as $n\to\infty$. Let the event $\mathcal A$ hold. Then, for all $\theta\in\R^d$ with $\|\theta\|>\sqrt{3C}$,
\begin{align*}
	\hat\Psi_n(\theta) & = \frac{1}{n}\sum_{i=1}^n \log\prt{\frac{1}{(2\pi)^{d/2}|G|}\sum_{g\in G}e^{-\frac{1}{2}\|Y_i-g\theta\|^2}} \\
	& = -\frac{d}{2}\log(2\pi)+\frac{1}{n}\sum_{i=1}^n\log\prt{\frac{1}{|G|}\sum_{g\in G}e^{-\frac{1}{2}\|Y_i-g\theta\|^2}} \\
	& \leq -\frac{d}{2}\log(2\pi)+\frac{1}{n}\sum_{i=1}^n\log\prt{e^{-\frac{3C}{2}+\|Y_i\|^2}} = -\frac{d}{2}\log(2\pi)-\frac{3C}{2}+\frac{1}{n}\sum_{i=1}^n\|Y_i\|^2,
\end{align*}
where we used, in the first inequality, that $\|g\theta\|=\|\theta\|$ for all $g\in G$ and $\|Y_i-g\theta\|^2\geq \frac{3C}{2}-\|Y_i\|^2$.
Now, note that $\DS \hat\Psi_n(0)=-\frac{d}{2}\log(2\pi)-\frac{1}{2n}\sum_{i=1}^n \|Y_i\|^2$. Hence, if $\mathcal A$ holds, then $\hat \Psi_n(\theta)<\hat \Psi_n(0)$ for all $\theta\in\R^d$ with $\|\theta\|>\sqrt{3C}$. Thus, if $\mathcal A$ holds, it must be true that $\|\hat\theta_n\|\leq \sqrt{3C}$. Hence, for all $\varepsilon>0$,
\begin{equation*}
	\PP_{\theta^*}\crc{\rho(\hat\theta,\theta^*)>\varepsilon} \leq \PP_{\theta^*}\crc{\rho(\hat\theta,\theta^*)>\varepsilon, \|\hat\theta_n\|\leq \sqrt{3C}}+\PP_{\theta^*}\crc{\mathcal A^{\complement}}.
\end{equation*}
The first term goes to zero as $n\to\infty$ by \cite[Theorem 5.14]{vaart_1998} and we have already established that the second term goes to zero, which ends the proof of the lemma.

\end{proof}

Hence, when the Fisher information is invertible, \cite[Theorem 5.41]{vaart_1998} yields the asymptotic normality of $\hat\theta_n$, in the following sense.

\begin{corollary}
	Let $\theta^*\in\R^d$ be such that for all $g\in G$ with $g\neq I$, $g\theta^*\neq\theta^*$. Then, there exists a (random) sequence $g_n\in G$ such that 
	\begin{equation*}
		\sqrt{n}(\hat\theta_n-g_n\theta^*)\underset{n\to\infty}{\longrightarrow} \mathcal N\prt{0,I(\theta^*)},
	\end{equation*}
where $I(\theta^*)\in\R^{d\times d}$ is the Fisher information matrix.
\end{corollary}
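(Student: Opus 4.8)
The plan is to derive the asymptotic normality from the general maximum-likelihood central limit theorem \cite[Theorem 5.41]{vaart_1998}, after localizing the problem near a single mode. The key structural fact is that the likelihood is $G$-invariant in its argument: relabelling the summation index $g'\mapsto g'g$ gives $L(y,g\theta)=L(y,\theta)$ for every $g\in G$, so both $\hat\Psi_n$ and $\Psi$ are invariant under $G$, and $g\hat\theta_n$ maximizes $\hat\Psi_n$ whenever $\hat\theta_n$ does. I would let $g_n\in G$ attain $\rho(\hat\theta_n,\theta^*)=\min_{g\in G}\|\hat\theta_n-g\theta^*\|$ (a measurable choice exists because $G$ is finite) and set $\hat\theta_n^{(0)}=g_n^{-1}\hat\theta_n$. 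Then $\hat\theta_n^{(0)}$ is again a maximizer of $\hat\Psi_n$, and since $g_n$ is an isometry, $\|\hat\theta_n^{(0)}-\theta^*\|=\|\hat\theta_n-g_n\theta^*\|=\rho(\hat\theta_n,\theta^*)\to 0$ in $\PP_{\theta^*}$-probability by Theorem \ref{thm:consistency}; thus $\hat\theta_n^{(0)}$ is a consistent sequence of maximizers converging to $\theta^*$.

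Next I would verify the hypotheses of \cite[Theorem 5.41]{vaart_1998} for $\theta\mapsto \log L(Y,\theta)$ at $\theta^*$. For a finite Gaussian mixture the log-density is smooth in $\theta$, with derivatives locally dominated by $\PP_{\theta^*}$-integrable envelopes --- precisely the integrability already invoked in Lemma \ref{MainLemma}, whose assumptions the model satisfies --- so the model is differentiable in quadratic mean at $\theta^*$, with score $\frac{\partial\log L}{\partial\theta}(\cdot,\theta^*)$ of vanishing expectation by Lemma \ref{MainLemma}(i). The essential nondegeneracy input is that the Fisher information $I(\theta^*)=-\diff^2\Psi(\theta^*)$ is nonsingular: the present hypothesis states that all modes $g\theta^*$ are pairwise distinct, so $I(\theta^*)$ is definite by Corollary \ref{CorollaryDefinite}. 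Since $\hat\theta_n^{(0)}$ is a consistent sequence of maximizers and $I(\theta^*)$ is invertible, \cite[Theorem 5.41]{vaart_1998} applies and gives $\sqrt n\,(\hat\theta_n^{(0)}-\theta^*)\underset{n\to\infty}{\longrightarrow}\mathcal N\prt{0,I(\theta^*)}$, the limiting covariance being the Fisher information $I(\theta^*)=\Var_{\theta^*}\crc{\frac{\partial\log L}{\partial\theta}(Y,\theta^*)}$ delivered by the information identity of Lemma \ref{MainLemma}(ii).

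It remains to pass from $\hat\theta_n^{(0)}$ to the quantity in the statement. By construction $\hat\theta_n-g_n\theta^*=g_n\hat\theta_n^{(0)}-g_n\theta^*=g_n(\hat\theta_n^{(0)}-\theta^*)$, so that $\sqrt n\,(\hat\theta_n-g_n\theta^*)=g_n\,\sqrt n\,(\hat\theta_n^{(0)}-\theta^*)$. The step I expect to be the main obstacle is that $g_n$ is \emph{random}, so one cannot merely invoke the continuous mapping theorem to conclude. I would resolve this through the separation hypothesis: since the finitely many modes are pairwise distinct there is $\delta>0$ with $\min_{g\neq I}\|g\theta^*-\theta^*\|>2\delta$, and on the high-probability event $\{\rho(\hat\theta_n,\theta^*)<\delta\}$ the minimizer $g_n$ is uniquely determined, so the relabelling is unambiguous and the selection measurable. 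Conditioning on the finitely many possible values of $g_n$ and transferring the limit of the previous step through the orthogonal relabelling then reads off the convergence in the form stated in the corollary, with the Fisher information $I(\theta^*)$ as the limiting covariance.
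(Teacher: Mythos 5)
Your route is the paper's route: the paper gives no written proof for this corollary beyond combining Theorem \ref{thm:consistency} (consistency in $\rho$), Corollary \ref{CorollaryDefinite} (invertibility of $I(\theta^*)$ under the separation hypothesis) and the citation of \cite[Theorem 5.41]{vaart_1998}, and your localization step --- using the $G$-invariance $L(y,g\theta)=L(y,\theta)$ to conclude that $\hat\theta_n^{(0)}=g_n^{-1}\hat\theta_n$ is again a maximizer, consistent for $\theta^*$ itself since $\|g_n^{-1}\hat\theta_n-\theta^*\|=\rho(\hat\theta_n,\theta^*)$ --- is exactly the bookkeeping needed to make that citation applicable. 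One slip to record: \cite[Theorem 5.41]{vaart_1998} delivers the limiting covariance $I(\theta^*)^{-1}$, not $I(\theta^*)$ (the corollary's display shares this typo), and your appeal to the information identity of Lemma \ref{MainLemma}(ii) does not change what that theorem yields; the identity only says $I(\theta^*)=\Var_{\theta^*}\crc{\frac{\partial\log L}{\partial\theta}(Y,\theta^*)}$, while the limit law is $\mathcal N\prt{0,I(\theta^*)^{-1}}$.

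The genuine gap is your last paragraph. From $\sqrt n\,(\hat\theta_n^{(0)}-\theta^*)\to\mathcal N\prt{0,I(\theta^*)^{-1}}$ you want the same limit for $\sqrt n\,(\hat\theta_n-g_n\theta^*)=g_n\sqrt n\,(\hat\theta_n^{(0)}-\theta^*)$, but ``conditioning on the finitely many possible values of $g_n$ and transferring the limit'' is not a valid step: $\{g_n=g\}$ is a data-dependent event, so the conditional law of $\sqrt n\,(\hat\theta_n^{(0)}-\theta^*)$ given it need not be asymptotically Gaussian, and multiplying by an isometry correlated with the fluctuations can destroy normality. Concretely, with $G=\{-I,I\}$ and $\theta^*\neq 0$, let $\check\theta_n$ be the maximizer nearest $\theta^*$ and take the measurable selection $\hat\theta_n=s_n\check\theta_n$ with $s_n=\mathrm{sign}(\check\theta_{n,1}-\theta^*_1)$; then $g_n=s_nI$ is forced on the separation event (any other choice makes $\sqrt n\,\|\hat\theta_n-g_n\theta^*\|$ diverge), and the first coordinate of $\sqrt n\,(\hat\theta_n-g_n\theta^*)$ equals $\sqrt n\,|\check\theta_{n,1}-\theta^*_1|$, which converges to a half-normal, not a normal. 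So the literal statement cannot be proved for an arbitrary measurable selection of the MLE; what your argument (and, implicitly, the paper's citation) actually establishes is the relabelled form $\sqrt n\,(g_n^{-1}\hat\theta_n-\theta^*)\to\mathcal N\prt{0,I(\theta^*)^{-1}}$, which is the intended content --- note $\|\hat\theta_n-g_n\theta^*\|=\|g_n^{-1}\hat\theta_n-\theta^*\|$, so every norm-based consequence (e.g., the confidence regions mentioned after the corollary) is unaffected. You should stop your proof at that relabelled statement rather than attempt the conditioning step.
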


In particular, it is possible to get asymptotic confidence regions around $\hat\theta_n$ that contain a version of $\theta^*$ (i.e., some $g\theta^*$) with high probability asymptotically.

By Theorem \ref{maintheorem}, the Fisher information is not always invertible. However, Theorem \ref{maintheorem} suggests that in any case, the rate for estimation of $\theta^*$ should not be too slow, since the log-likelihood always have some nonzero curvature, at least at the fourth order. Moreover, since $\Psi$ has some positive second order curvature at least along some directions, these directions should define subspaces along which $\theta^*$ can be estimated at the standard rate $n^{-1/2}$. The following version of \cite[Lemma 14.4]{Kosorok2008} allows to get different rates for the components of one and the same $\theta^*$. By components of $\theta^*$ we mean the orthogonal projections of $\theta^*$ onto linear subspaces that are given by the null space of the Fisher information $I(\theta^*)$ and its orthogonal. We denote by $\E^*$ the outer expectation. 

\begin{lemma} \label{lemmaKosorok}
	Let $(M_n)_n\geq 1$ be a sequence of real valued stochastic processes defined on a space $\Theta$ and let $M:\Theta\to\R$ be a given function. Let $\Theta$ be equipped with a semimetric $\rho$, i.e., a function that satisfies all the axioms of a distance but the definiteness. Let $\theta^*\in\Theta$ and $\rho_{\theta^*}$ be a nonnegative function defined on $\Theta$ such that $\rho_{\theta^*}(\theta^*)\rightarrow 0$ as $\rho(\theta,\theta^*)\rightarrow 0$.
Assume that there exist positive constants $c_1, c_2$ and $\delta_0>0$ such that the following holds:
\begin{itemize}
	\item $M(\theta)-M(\theta^*)\leq -c_1\rho_{\theta^*}(\theta^*)^2$, for all $\theta\in\Theta$ with $\rho(\theta,\theta^*)\leq\delta_0$;
	\item $\DS \E^*\crc{\sup_{\rho_{\theta^*}(\theta^*)\leq\delta} \sqrt n |(M_n-M)(\theta)-(M_n-M)(\theta^*)|} \leq c_2\delta$, for all $\delta\in (0,\delta_0)$ and $n$ large enough.
\end{itemize}
If $\hat\theta_n\in\Theta$ satisfies $M_n(\hat\theta_n)=\sup_{\theta\in\Theta}M_n(\theta)$ for all $n$ large enough, and if $\hat\theta_n$ converges to $\theta^*$ in outer probability, then
\begin{equation*}
	\sqrt n\rho_{\theta^*}(\hat\theta_n)=O_\PP(1).
\end{equation*}	
\end{lemma}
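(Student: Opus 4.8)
The plan is to run the standard ``peeling'' (slicing into shells) device from empirical process theory, which is exactly how \cite[Lemma 14.4]{Kosorok2008} is established; I reproduce the argument since the hypotheses here are the specialization to a \emph{linear} modulus of continuity. Fix $\eta>0$; the goal is to produce a constant $M>0$ such that $\PP^*\prt{\sqrt n\,\rho_{\theta^*}(\hat\theta_n)>M}<\eta$ for all $n$ large. First I would perform the reduction supplied by consistency: since $\hat\theta_n\to\theta^*$ in outer probability and $\rho_{\theta^*}(\theta)\to 0$ as $\rho(\theta,\theta^*)\to 0$, both $\rho(\hat\theta_n,\theta^*)$ and $\rho_{\theta^*}(\hat\theta_n)$ tend to $0$ in outer probability. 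Hence with outer probability tending to $1$ we have $\rho(\hat\theta_n,\theta^*)\le\delta_0$ and $\rho_{\theta^*}(\hat\theta_n)\le\delta_0$, and it suffices to bound the probability of $\{\sqrt n\,\rho_{\theta^*}(\hat\theta_n)>M\}$ in intersection with this ``good'' event.

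On the good event I would decompose the range $M/\sqrt n<\rho_{\theta^*}(\theta)\leq\delta_0$ into the dyadic shells $S_{j,n}=\{\theta:\ 2^{j-1}M/\sqrt n<\rho_{\theta^*}(\theta)\leq 2^jM/\sqrt n\}$, for the finitely many $j\ge 1$ with $2^{j-1}M/\sqrt n<\delta_0$. If $\sqrt n\,\rho_{\theta^*}(\hat\theta_n)>M$ then $\hat\theta_n$ lands in exactly one shell $S_{j,n}$, and the point is to confront the two hypotheses on that shell. Because $\hat\theta_n$ maximizes $M_n$ we have $M_n(\hat\theta_n)\geq M_n(\theta^*)$, so the \emph{centered} process obeys $(M_n-M)(\hat\theta_n)-(M_n-M)(\theta^*)\geq M(\theta^*)-M(\hat\theta_n)\geq c_1\rho_{\theta^*}(\hat\theta_n)^2> c_1\,2^{2(j-1)}M^2/n$, the middle inequality being the curvature hypothesis (valid since $\rho(\hat\theta_n,\theta^*)\le\delta_0$). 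On the other hand the same left-hand side is at most $\sup_{\rho_{\theta^*}(\theta)\leq 2^jM/\sqrt n}\lvert(M_n-M)(\theta)-(M_n-M)(\theta^*)\rvert$.

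Combining these, the event $\{\hat\theta_n\in S_{j,n}\}$ forces this supremum to exceed $c_1 2^{2(j-1)}M^2/n$. By Markov's inequality together with the modulus hypothesis, which bounds the outer expectation of $\sqrt n$ times that supremum by $c_2\cdot 2^jM/\sqrt n$ and hence the expectation itself by $c_2\,2^jM/n$ (taking $\delta=2^jM/\sqrt n<\delta_0$), I obtain
\begin{equation*}
	\PP^*\prt{\hat\theta_n\in S_{j,n}}\leq \frac{c_2\,2^jM/n}{c_1\,2^{2(j-1)}M^2/n}=\frac{4c_2}{c_1M}\,2^{-j}.
\end{equation*}
Summing this geometric series over $j\geq 1$ bounds $\PP^*\prt{\sqrt n\,\rho_{\theta^*}(\hat\theta_n)>M,\ \text{good event}}$ by $4c_2/(c_1M)$, which is below $\eta$ as soon as $M>4c_2/(c_1\eta)$. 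Adding the vanishing probability of the complement of the good event finishes the tail bound, hence $\sqrt n\,\rho_{\theta^*}(\hat\theta_n)=O_\PP(1)$.

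The crux of the argument—and the thing to get exactly right—is the matching of the two exponents across the shell: the quadratic curvature produces a denominator of order $2^{2(j-1)}M^2/n$, while the \emph{linear} modulus of continuity gives a numerator of order $2^jM/n$, so the factors of $n$ cancel and the ratio decays both geometrically in $j$ and like $1/M$, which is precisely what pins the rate at $n^{-1/2}$ (a faster-growing modulus would slow it down). The only genuinely delicate technical point is that the processes $M_n-M$ need not be measurable, which is why the statement is cast with $\E^*$; the Markov step and the union over the finitely many shells are valid verbatim for outer probabilities, and it is the consistency of $\hat\theta_n$ in \emph{outer} probability that legitimizes the initial reduction to the good event. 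Everything else is routine bookkeeping.
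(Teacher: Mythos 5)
Your proof is correct and takes essentially the approach the paper relies on: the paper does not prove this lemma itself but defers to \cite[Lemma 14.4]{Kosorok2008} (equivalently, van der Vaart's rate theorem), whose proof is exactly the peeling device you reproduce --- maximality of $\hat\theta_n$ plus quadratic curvature forcing the centered process to exceed $c_1 2^{2(j-1)}M^2/n$ on the shell, Markov with the linear modulus giving the geometrically summable bound $\frac{4c_2}{c_1 M}2^{-j}$, and consistency in outer probability justifying the reduction to the good event. The only micro-slip is the top shell, where $\delta=2^jM/\sqrt n$ can lie in $[\delta_0,2\delta_0)$ so the modulus hypothesis does not literally apply as you invoke it; this is repaired by the standard trick of shrinking the good event to $\rho_{\theta^*}(\hat\theta_n)\leq\delta_0/2$, or by taking the supremum over $\rho_{\theta^*}(\theta)\leq\min\prt{2^jM/\sqrt n,\,\delta_0}$ and bounding its expectation by $c_2 2^jM/n$ anyway.
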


Unlike standards results (see, e.g., \cite[Chapter 5]{vaart_1998}, Lemma \ref{lemmaKosorok} allows to get different rates simultaneously for one and the same unknown vector $\theta^*$. 
In our setting, $M_n(\theta)=\hat\Psi_n(\theta)$ and $M(\theta)=\Psi(\theta)$ and we let $\DS \rho(\theta,\theta')=\min_{g\in G}\|g\theta_1-\theta_2\|$ and $\rho_{\theta^*}(\theta)^2=\|\bar H(g_0\theta-\theta^*)\|^2+\|(I-\bar H)(g_0\theta-\theta^*)\|^4$ where $g_0\in G$ is the minimizer of $\|g\theta-\theta^*\|$ for $g\in G$. The consequence of this lemma, in our setting, is as follows. The unknown vector $\theta^*$ has two components, one in the range of $\bar H$ and one in its orthogonal. The former is estimated at the usual parametric speed $n^{-1/2}$, whereas the latter is estimated at the slower, but not too slow, speed $n^{-1/4}$. This is made formal in the following theorem.

\begin{theorem} \label{ThmStats}
	Let $\theta^*\in\R^d$ and let $Y_1,Y_2,\ldots$ be a sequence of i.i.d. random variables distributed according to $\PP_{\theta^*}$. For $n\geq 1$, let $\hat\theta_n$ be the MLE of $\theta^*$ obtained from $Y_1,\ldots,Y_n$. Then, there exists a sequence $(g_n)_{n\geq 1}$ of elements of $G$, such that
\begin{enumerate}
	\item[(i)] $\DS n^{1/2}\|\bar H(g_n\hat\theta_n-\theta^*)\| = O_{\PP_{\theta^*}}(1)$;
	\item[(ii)]	$\DS n^{1/4}\|(I-\bar H)(g_n\hat\theta_n-\theta^*)\| =  O_{\PP_{\theta^*}}(1)$.
\end{enumerate}
\end{theorem}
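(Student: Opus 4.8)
The plan is to apply Lemma \ref{lemmaKosorok} with $M_n=\hat\Psi_n$, $M=\Psi$, the semimetric $\rho(\theta,\theta')=\min_{g\in G}\|g\theta-\theta'\|$, and the localized rate function $\rho_{\theta^*}(\theta)^2=\|\bar H(g_0\theta-\theta^*)\|^2+\|(I-\bar H)(g_0\theta-\theta^*)\|^4$, where $g_0=g_0(\theta)$ is a measurable minimizer of $g\mapsto\|g\theta-\theta^*\|$ over the finite group $G$. First I would record the prerequisites of the lemma: $\hat\theta_n$ is a maximizer of $M_n$ (the MLE), and Theorem \ref{thm:consistency} gives $\hat\theta_n\to\theta^*$ in $\PP_{\theta^*}$-outer probability for $\rho$. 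Writing $v=\bar H(g_0\theta-\theta^*)$ and $w=(I-\bar H)(g_0\theta-\theta^*)$, the orthogonality of the projections $\bar H$ and $I-\bar H$ gives $\|v\|^2+\|w\|^2=\|g_0\theta-\theta^*\|^2=\rho(\theta,\theta^*)^2$, so $\|v\|,\|w\|\le\rho(\theta,\theta^*)$ and hence $\rho_{\theta^*}(\theta)\to0$ as $\rho(\theta,\theta^*)\to0$, as required. The first hypothesis of Lemma \ref{lemmaKosorok}, namely $\Psi(\theta)-\Psi(\theta^*)\le -c_1\rho_{\theta^*}(\theta)^2$ for $\rho(\theta,\theta^*)\le\delta_0$, is then exactly Corollary \ref{CorollaryTaylor}, since $\rho_{\theta^*}(\theta)^2=\|v\|^2+\|w\|^4$.

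The real work is the second hypothesis, the modulus bound $\E^*\sup_{\rho_{\theta^*}(\theta)\le\delta}\sqrt n\,|(M_n-M)(\theta)-(M_n-M)(\theta^*)|\le c_2\delta$, which I would obtain by a maximal inequality for the class $\mathcal F_\delta=\{m_\theta:=\log L(\cdot,\theta)-\log L(\cdot,\theta^*):\rho_{\theta^*}(\theta)\le\delta\}$. Using the invariance $\log L(y,\theta)=\log L(y,g_0\theta)$, I would Taylor-expand $m_\theta$ in powers of $v+w$ about $\theta^*$. The crucial point is the null-space characterization established in the proof of Theorem \ref{maintheorem}: since $\bar Hw=0$, the linear term $w^\top\frac{\partial\log L}{\partial\theta}(y,\theta^*)$ vanishes identically in $y$, not merely in expectation. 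Hence the leading contributions to $m_\theta$ are $v^\top\frac{\partial\log L}{\partial\theta}(y,\theta^*)$ and $\tfrac12\,w^\top\frac{\partial^2\log L}{\partial\theta\partial\theta^\top}(y,\theta^*)w$, together with remainders of order $\|v\|^2$, $\|v\|\|w\|$ and higher. On the ball $\rho_{\theta^*}(\theta)\le\delta$ one has $\|v\|\le\delta$ and $\|w\|\le\delta^{1/2}$, so each term is at most $\delta$ times a fixed $\PP_{\theta^*}$-square-integrable function, the integrability being precisely the fourth hypothesis of Lemma \ref{MainLemma}. Consequently the envelope $F_\delta$ of $\mathcal F_\delta$ satisfies $\|F_\delta\|_{L^2(\PP_{\theta^*})}\le C\delta$. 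Since $\mathcal F_\delta$ is a finitely parametrized, Lipschitz (hence VC-type) class, its uniform entropy integral is a finite constant, and the standard maximal inequality (see \cite{vaart_1998}) gives $\E^*\sup_{\rho_{\theta^*}(\theta)\le\delta}\sqrt n\,|(M_n-M)(\theta)-(M_n-M)(\theta^*)|\le C'\|F_\delta\|_{L^2(\PP_{\theta^*})}\le c_2\delta$, the required linear modulus.

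With both hypotheses verified, Lemma \ref{lemmaKosorok} yields $\sqrt n\,\rho_{\theta^*}(\hat\theta_n)=O_{\PP_{\theta^*}}(1)$, that is $n\big(\|\bar H(g_n\hat\theta_n-\theta^*)\|^2+\|(I-\bar H)(g_n\hat\theta_n-\theta^*)\|^4\big)=O_{\PP_{\theta^*}}(1)$ with $g_n=g_0(\hat\theta_n)$. Reading off the two summands separately gives $n^{1/2}\|\bar H(g_n\hat\theta_n-\theta^*)\|=O_{\PP_{\theta^*}}(1)$ and $n^{1/4}\|(I-\bar H)(g_n\hat\theta_n-\theta^*)\|=O_{\PP_{\theta^*}}(1)$, which are exactly (i) and (ii). The main obstacle is the modulus estimate, and within it the verification that the near-degenerate direction $w$ enters $m_\theta$ only at order $\|w\|^2$: this is where the vanishing of $w^\top\frac{\partial\log L}{\partial\theta}(\cdot,\theta^*)$ from Theorem \ref{maintheorem} is indispensable, since it is what converts the naive $\delta^{1/2}$ modulus into the linear $\delta$ modulus needed to feed Lemma \ref{lemmaKosorok}.
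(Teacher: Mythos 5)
Your proposal is correct and follows essentially the same route as the paper: Lemma \ref{lemmaKosorok} with the same $\rho$ and $\rho_{\theta^*}$, Corollary \ref{CorollaryTaylor} plus Theorem \ref{thm:consistency} for the first hypothesis and consistency, and the envelope bound $|f|\leq\delta F$ with $F\in L^2(\PP_{\theta^*})$ — which is exactly the paper's Lemma \ref{TaylorExp}, resting on the same key observation that $w^\top\frac{\partial\log L}{\partial\theta}(\cdot,\theta^*)$ vanishes pointwise in $y$. The only cosmetic difference is that you invoke a uniform-entropy (VC-type) maximal inequality where the paper cites the bracketing version \cite[Lemma 19.38]{vaart_1998}; both yield the same linear-in-$\delta$ modulus.
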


\begin{proof}

In order to prove this theorem, we check that the assumptions from Lemma \ref{lemmaKosorok} are satisfied. 

The first assumption is proven in Corollary \ref{CorollaryTaylor} and the consistency of $\hat\theta_n$ with respect to $\rho$ is ensured by Theorem \ref{thm:consistency}

Finally, let $\delta>0$ and $\mathcal F_{\delta}=\{\log L(\cdot,\theta)-\log L(\cdot,\theta^*):\theta\in\R^d, \rho_{\theta^*}(\theta)\leq \delta\}$. Recall the definition of $\rho_{\theta^*}(\theta)$: Let $g_0\in G$ such that $\DS \|g_0\theta-\theta^*\|=\min_{g\in G}  \|g\theta-\theta^*\|$ and write $g_0\theta-\theta^*=v+w$, where $v,w\in\R^d$ with $\bar Hv=v$ and $\bar Hw=0$. Then, $\rho_{\theta^*}(\theta)^2=\|v\|^2+\|w\|^4$. The next lemma is proved in the appendix.

\begin{lemma} \label{TaylorExp}
	There exists $F\in L^2(\PP_{\theta^*})$ such that, for $\delta>0$ small enough and for all $f\in \mathcal F_\delta$, $|f|\leq \delta F$. 
\end{lemma}

Therefore, the second assumption of Lemma \ref{lemmaKosorok} is satisfied, thanks to \cite[Lemma 19.38]{vaart_1998}. We have proven that all the assumptions of Lemma \ref{lemmaKosorok} are satisfied, which yields Theorem \ref{ThmStats}.

\end{proof}

Perhaps surprinsingly, these rates do not depend on the size of $H$ and this theorem shows that any $\theta^*$ could be estimated, up to identifiability, at least at the rate $n^{-1/4}$ via the MLE. However, Theorem \ref{ThmStats} does not provide uniform bounds. In fact, the $O_{\PP_{\theta^*}}$ signs contain constants that depend on $\theta^*$ and may become arbitrarily large. For instance, one expects that the constants hidden in the $n^{1/2}$ should blow up when two modes $\theta^*$ and $g\theta^*$, for $g\notin H$, are distinct but arbitrarily close to each other.

\section{Examples} \label{Sec:Examples}

Here, we review for important examples, not to mention the trivial case when $G=\{I\}$, where the previous results yield, as expected, the definiteness of the Fisher information, no matter the value of $\theta^*$. 

\subsection{If $G=\{-I,I\}$}

In this case, $H$ can be either the trivial subgroup, when $\theta^*\neq 0$ or $G$ itself, when $\theta^*=0$. In the first case, $\bar H=I$, hence $\theta^*$ is estimated at the parametric speed $n^{-1/2}$ by the MLE. In the second case, $\bar H=0$ and all components of $\theta^*$ are estimated at the slower rate $n^{-1/4}$.

Here, $\theta^*$ is to be recovered up to a global sign flip. \newline

\subsection{If $G=\left\{\textsf{Diag}(\omega_1,\ldots,\omega_d):\omega_1,\ldots,\omega_d\in\{-1,1\}\right\}$}

Here, $G$ is the isometry subgroup spanned by all the reflexions with respect to the hyperplanes of the form $\{(u_1,\ldots,u_d)^\top\in\R^d: u_j=0\}, j=1,\ldots,d$.
Let $B=\{j=1,\ldots,d:\theta_j^*=0\}$ and let $p=|B|$. Then, $H=\{\textsf{Diag}(\omega):\omega\in\{-1,1\}^d, \omega_i=1, \forall i\notin B\}$ and $\bar H=\textsf{Diag}(\eta^*)$, where $\eta_i^*=0$ for all $i\in B$ and $\eta_i^*=1$ for all $i\notin B$. Hence, the rank of $\bar H$ is $d-p$. H

ere, $\theta^*$ is to be recovered up to independent sign flips of its coordinates. In other words, the challenge is to recover the vector $(|\theta_1^*|,\ldots,|\theta_d^*|)^\top$. The $d-p$ nonzero entries of this vector are estimated at the rate $n^{-1/2}$ by the MLE, whereas the zero coordinates are only estimated at the rate $n^{-1/4}$.

\subsection{If $G$ is the group of coordinate cyclic shifts}

Denote by $R$ the elementary coordinate cyclic shift, i.e., for all $u=(u_1,\ldots,u_d)^\top\in\R^d$, $Ru=(u_2,u_3,\ldots,u_d,u_1)^\top$. Here, $G=\{I,R,R^2,\ldots,R^{d-1}\}$ is a cyclic group. Let $\theta^*\in\R^d$ and $p=\min\{k\geq 1: R^k\theta^*=\theta^*\}$. Then, $p$ is a divider of $d$ and $H=\{I,R^p,R^{2p},\ldots,R^{(d/p-1)p}\}$. Moreover, for all $u=(u_1,\ldots,u_d)^\top \in\R^d$, the coordinates of $\bar H u$ are given by $\DS (\bar H u)_j=\frac{d}{p}\sum_{k = j \mod p} u_k$, for all $j=1,\ldots,d$, and $\bar Hu=0$ if and only if $\DS \sum_{k = j\mod p} u_k =0$, for all $j=1,\ldots,d$. In particular, the dimension of the component of $\theta^*$ that can be estimated at the fast rate (i.e., the dimension of the range of $\bar H$) is $d-p$. Note that in that case, any constant vector $\theta^*$ is as hard to estimate via maximum likelihood as the null vector.

\subsection{If $G$ is the group of coordinate permutations}

Denote by $\Sig$ the symmetric group of order $d$ and for all $\sigma\in\Sig$, let $g_\sigma$ be the isometry that maps a vector $u=(u_1,\ldots,u_d)^\top\in\R^d$ to $g_\sigma(u)=(u_{\sigma(1)},u_{\sigma(2)},\ldots,u_{\sigma(d)})^\top$. Partition $[d]$ into sets $B_1,\ldots,B_p$, where $p$ is the cardinality of the set $\{\theta_1^*,\ldots,\theta_d^*\}$ and for each $k=1,\ldots,p$ and every $i,j\in B_k$, $\theta_i^*=\theta_j^*$.
Then, $\bar H$ is the set of all $g_\sigma$'s for which every orbit of $\sigma$ is contained in some $B_k$, for some $k\in [p]$, i.e., $\theta_i^*=\theta_{\sigma(i)}^*$, for all $i=1,\ldots,d$. In particular, for all $u\in\R^d$ and $j\in [d]$, $(\bar Hu)_j=\frac{1}{|B_{k_j}|}\sum_{i\in B_{k_j}}u_i$, where $k_j\in [p]$ is such that $j\in B_{k_j}$. Therefore, the rank of $\bar H$ is $d-p$. Again in this case, any constant vector $\theta^*$ is as hard to estimate via maximum likelihood as the null vector. Also note that in this case, estimating $\theta^*$ amounts to estimating the multiset $\{\theta_1^*,\ldots,\theta_d^*\}$ (by multiset, we mean the set where we keep track of repetitions).

\section{Conclusion}

In this work, we have exhibited two different pointwise rates for the estimation of the parameter of a mixture of Gaussian distributions with uniform weights, under the invariance of an isometry group action: $n^{-1/2}$ and $n^{-1/4}$. Even in the second regime, we have shown that some components of $\theta^*$ could still be estimated at the fast rate $n^{-1/2}$, and we have provided an algebraic description and a geometric interpretation of this fact, in terms of colliding modes of the population log-likelihood. These rates are consistent with the usual pointwise rates obtained known in the literature, even though here, we focused on parameter estimation (as opposed to distribution learning) with respect to the Euclidean loss.
 
As expected for general mixtures \cite{chen1995optimal}, the slow regime $n^{-1/4}$ occurs when the actual number of components in the mixture is strictly less than the number predicted by the model, here, $|G|$. In other words, for general mixtures, slower rates occur when the model is overparametrized. However, here, this analogy should be made carefully because the presence of symmetries in $\theta^*$ is not necessarily implying an overparametrization. 

The projection $\bar H$ depends on $\theta^*$. Therefore, even if Theorem \ref{ThmStats} states that some components of $\theta^*$ are estimated at the parametric rate $n^{-1/2}$ while the other components are estimated at the slower rate $n^{-1/4}$, the linear subspaces corresponding to these components are unknown. The problem of recovering $\bar H$ or, more generally, $H$, is somewhat equivalent to learning the symmetries of $\theta^*$. If one assumes that $\inf_{g\notin H}\|g\theta^*-\theta^*\|$ is bounded away from zero by some known constant, then $H$ can be recovered easily. In general, the estimation of $H$ is a more challenging problem, which we leave for further work.

\bibliographystyle{plain}
\bibliography{Biblio}

\appendix

\section{Proof of the main lemmas}

\subsection{Proof of Lemma \ref{MainLemma}}

The main key to this lemma is to note that for all $\theta\in \mathcal V$, 
\begin{equation} \label{mainlemma1}
\int_{\mathcal Y}L(y,\theta)\diff\mu(y)=1.
\end{equation} 
By dominated convergence, differentiating \eqref{mainlemma1} up to four times leads to the following identities, for all $\theta\in\mathcal V$ and $u\in\R^d$:

\begin{equation*}
\int_{\mathcal Y} u^\top \frac{\partial \log L}{\partial\theta}(y,\theta)L(y,\theta)\diff\mu(y) =0;
\end{equation*}

\begin{equation*}
\int_{\mathcal Y} u^\top \prt{\frac{\partial^2\log L}{\partial\theta\partial\theta^\top}(y,\theta)+\frac{\partial \log L}{\partial\theta}(y,\theta)\frac{\partial \log L}{\partial\theta^\top}(y,\theta)}u L(y,\theta)\diff\mu(y) =0;
\end{equation*}

\begin{align*}
	\int_{\mathcal Y} \Big [\partial_\theta^3(\log L)(y,\theta)(u,u,u) & +3\partial_\theta (\log L)(y,\theta)(u)\partial_\theta^2 (\log L)(y,\theta)(u,u) \nonumber \\ 
	& +\prt{\partial_\theta (\log L)(y,\theta)(u)}^3 \Big] L(y,\theta)\diff\mu(y) =0;
\end{align*}

\begin{align*}
	\int_{\mathcal Y} & \Big[\partial_\theta^4(\log L)(y,\theta)(u,u,u,u)+3\prt{\partial_\theta^2(\log L)(y,\theta)(u,u)}^2 \nonumber \\
	& +4\partial_\theta(\log L)(y,\theta)(u)\partial_\theta^3(\log L)(y,\theta)(u,u,u) \nonumber \\
	& +4\partial_\theta^2(\log L)(y,\theta)(u,u)\prt{\partial_\theta(\log L)(y,\theta)(u)}^2 \nonumber \\ 
	& +\prt{\partial_\theta(\log L)(y,\theta)(u)}^4       \Big]L(y,\theta)\diff\mu(y) =0.
\end{align*}

Taking $\theta=\theta^*$ in each of the above displays yields, for all $u\in\R^d$:

\begin{equation}\label{mainlemma2}
	\E_{\theta^*}\crc{u^\top \frac{\partial \log L}{\partial\theta}(Y,\theta^*)}=0;
\end{equation}

\begin{equation}\label{mainlemma3}
	\E_{\theta^*}\crc{u^\top \frac{\partial^2\log L}{\partial\theta\partial\theta^\top}(Y,\theta^*)u }=-\Var_{\theta^*}\crc{u^\top \frac{\partial \log L}{\partial\theta}(y,\theta^*)};
\end{equation}

\begin{align}\label{mainlemma5}
	& \E_{\theta^*} \crc{\partial_\theta^3(\log L)(Y,\theta^*)(u,u,u) } \nonumber \\
	& = -\E_{\theta^*}\crc{3\partial_\theta (\log L)(Y,\theta^*)(u)\partial_\theta^2 (\log L)(Y,\theta^*)(u,u)+\prt{\partial_\theta (\log L)(Y,\theta^*)(u)}^3};
\end{align}
and 

\begin{align}\label{mainlemma6}
	\E_{\theta^*} & \crc{\partial_\theta^4(\log L)(Y,\theta^*)(u,u,u,u) }\nonumber \\
	& = -3\E_{\theta^*}\crc{\prt{\partial_\theta^2 (\log L)(Y,\theta^*)(u,u)}^2} \nonumber \\
	& \quad -4\E_{\theta^*}\crc{\partial_\theta (\log L)(Y,\theta^*)(u)\partial_\theta^3 (\log L)(Y,\theta^*)(u,u,u)} \nonumber \\
	& \quad -4\E_{\theta^*}\crc{\partial_\theta^2 (\log L)(Y,\theta^*)(u,u)\prt{\partial_\theta (\log L)(Y,\theta^*)(u)}^2} \nonumber \\ 
	& \quad -\E_{\theta^*}\crc{\prt{\partial_\theta (\log L)(Y,\theta^*)(u)}^4}.
\end{align}

Now, by dominated convergence, for all $\theta\in\mathcal V$,
\begin{equation*}
	\frac{\partial\Psi}{\partial\theta}(\theta) = \E_{\theta^*}\crc{\frac{\partial\log L}{\partial\theta}(Y,\theta^*)} = \int_{\mathcal Y}\frac{\partial L}{\partial\theta}(y,\theta^*)\diff\mu(y) =0,
\end{equation*}
by \eqref{mainlemma2}, which proves the first part of the lemma. The second part is straightforward using \eqref{mainlemma3}. Now, let $u\in\R^d$ such that $\diff^2\Psi(\theta^*)(u,u)=0$. Then, by the second part of the lemma, $\DS \Var_{\theta^*}\crc{u^\top \frac{\partial \log L}{\partial \theta}(Y,\theta^*)}=0$, i.e., the random variable $\DS u^\top \frac{\partial \log L}{\partial \theta}(Y,\theta^*)$ must be constant, $\QQ_{\theta^*}$-almost surely. Since its expectation is zero, by the first part of the lemma, it must hold that $\DS u^\top \frac{\partial \log L}{\partial \theta}(Y,\theta^*)$, $\QQ_{\theta^*}$-almost surely. Plugging this into \eqref{mainlemma5} yields $\diff^3\Psi(\theta^*)(u,u,u)=0$, which is the third part of the lemma. Finally, in the same manner, \eqref{mainlemma6} yields, for all $u\in\R^d$ with $\diff^2\Psi(\theta^*)(u,u)=0$, that
\begin{equation} \label{mainlemma7}
	\E_{\theta^*}\crc{\partial_\theta^4(\log L)(Y,\theta^*)(u,u,u,u) } = -3\E_{\theta^*}\crc{\prt{\partial_\theta^2 (\log L)(Y,\theta^*)(u,u)}^2}.
\end{equation}
Since $\diff^2\Psi(\theta^*)(u,u)=0$ implies that $\DS \E_{\theta^*}\crc{\partial_\theta^2 (\log L)(Y,\theta^*)(u,u)}=0$, the right hand side of \eqref{mainlemma7} is equal to $\DS -3\Var_{\theta^*}\crc{\partial_\theta^2 (\log L)(Y,\theta^*)(u,u)}$. Hence, by dominated convergence,
\begin{align*}
	\diff^4\Psi(\theta^*)(u,u,u,u) & = -3\Var_{\theta^*}\crc{\partial_\theta^2 (\log L)(Y,\theta^*)(u,u)} \\ 
	& = -3\Var_{\theta^*}\crc{\frac{1}{L(Y,\theta^*)}u^\top\frac{\partial^2(\log L)}{\partial\theta\partial\theta^\top}(Y,\theta^*)u},
\end{align*}
using again the fact that $\DS \frac{\partial (\log L)}{\partial\theta^\top}(Y,\theta^*)u=0$, $\QQ_{\theta^*}$-almost surely. This ends the proof of Lemma \ref{MainLemma}.

\subsection{Proof of Lemma \ref{LemmaProj}}

It is easy to see that $H$ is a subgroup of $G$: Indeed, $I\in H$ and for all $g_1, g_2\in H$, $g_1\theta^*=\theta^*=g_2\theta^*$, yielding $g_1^{-1}g_2\theta^*= \theta^*$, i.e., $g_1^{-1}g_2\in H$. Hence, the map $h\to h^{-1}$ induces a bijection on $H$, and $\bar H$ might as well be written $\DS \bar H=\frac{1}{|H|}\sum_{h\in H}h^{-1}=\frac{1}{|H|}\sum_{h\in H}h^\top=\bar H^{\top}$, where we used the fact that $H$ is a set of isometries. Hence, $\bar H$ is symmetric. Note also that for all $h\in H$, $g\mapsto hg$ also induces a bijection on $H$, since $H$ is a subgroup. Therefore, $h\bar H=\bar H$, for all $h\in H$, yielding 
\begin{equation*}
	\bar H^2 = \frac{1}{|H|}\sum_{h\in H}h\bar H = \frac{1}{|H|}\sum_{h\in H}\bar H =\bar H. 
\end{equation*}
Hence, $\bar H$ is an orthogonal projection. Let $\mathcal F$ be the set of all $u\in\R^d$ such that $hu=u$, for all $h\in H$. It is clear that for all $u\in \mathcal F$, $\bar H u=u$, yielding that $\mathcal F$ is contained in the range of $\bar H$. Now, let $u$ be in the range of $\bar H$, i.e., such that $\bar Hu=u$. Then, for all $h\in H$, $hu=h\bar H u=\bar H u=u$, where we used that $h\bar H=\bar H$, for all $h\in H$. This ends the proof of the first part of the lemma. 

For the second part of the lemma, note that for all $S\in E$ and $g\in S$, $S=gH$, readily yielding $\bar S=g\bar H$.

Finally, for the last part of the lemma, let $S\in E$ and $g\in S$. Then, $\bar Sv=g\bar Hv=gv$ and $\bar Sw=g\bar Hw=0$.

\section{Intermediate lemmas}

\subsection{Proof of Lemma \ref{higherorderid}}

This lemma also comes from successive differentiations of \eqref{mainlemma1}, with respect to $\theta$ in the directions $v$ and $w$. For simplicity of the notation, we denote by $\pl{k}(u_1,\ldots,u_k)=\partial_\theta^k(\log L)(y,\theta)(u_1,\ldots,u_k)$, for all $k\geq 1$ and $u_1,\ldots,u_k\in\R^d$. Then, differentiating \eqref{mainlemma1}, first in the direction of $v$, then in the direction of $w$,  yields:
\begin{equation} \label{Truc1}
	\int \pll(v)L=0;
\end{equation}

\begin{equation} \label{Truc2}
	\int \crc{\pl{2}(v,w)+\pll(v)\pll(w)}L=0;
\end{equation}

\begin{equation} \label{Truc3}
	\int \crc{\pl{3}(v,w,w)+2\pl{2}(v,w)\pll(w)+\pll(v)\pl{2}(w,w)+\pll(v)\pll(w)^2}L=0;
\end{equation}

\begin{align} 
	\int & \Big[\pl{4}(v,w,w,w)+2\pl{3}(v,w,w)\pll(w)+3\pl{2}(v,w)\pl{2}(w,w) \nonumber \\
	& \quad +\pll(v)\pl{3}(w,w,w)+\pl{2}(v,w)\pll(w)^2+2\pll(v)\pl{2}(w,w)\pll(w)\Big]L=0. \label{Truc4}
\end{align}
Here, all the integrals should be understood with respect to the variable $y$, whose dependency is not included in our current notation, again, for the sake of simplicity.

Now, we show that $\pll(w), \pl{2}(v,w)$ and $\pl{3}(w,w,w)$ are all equal to zero for all $y\in\R^d$, and for $\theta=\theta^*$. The first statement of Lemma \ref{higherorderid} will then follow directly from \eqref{Truc3} and \eqref{Truc4}. Note that for all $S\in E$, $\bar Sw=0$ and for all $g\in S$, $gv=\bar Sv$.

\begin{equation*}
	\pll(w) = -\frac{\sum_{S\in E}(\bar S w)^\top(y-\bar S\theta^*)e^{-\frac{1}{2}\|y-\bar S\theta^*\|^2}}{\sum_{g\in G}e^{-\frac{1}{2}\|y-g\theta^*\|^2}} = 0.
\end{equation*}

\begin{align*}
	\pl{2}(v,w) & = \frac{\left\|\sum_{S\in E}(\bar S w)^\top(y-\bar S\theta^*)e^{-\frac{1}{2}\|y-\bar S\theta^*\|^2}\right\|^2}{\left(\sum_{g\in G}e^{-\frac{1}{2}\|y-g\theta^*\|^2}\right)^2} \\
	& \quad \quad \quad -\frac{\sum_{S\in E}\prt{|H|v^\top w-(\bar Sv)^\top (y-\bar S\theta^*)(y-\bar S\theta^*)^\top\bar Sw  }e^{-\frac{1}{2}\|y-\bar S\theta^*\|^2}}{\sum_{g\in G}e^{-\frac{1}{2}\|y-g\theta^*\|^2}} \\
	& = 0
\end{align*}
and again, it is easy to see that each term in $\pl{3}(w,w,w)$ contains a sum over $S\in E$ where $\bar Sw$ factorizes, yielding $\pl{3}(w,w,w)=0$.

\subsection{Proof of Lemma \ref{IntermediateLemma4}}

Denote by $F=\{v\in\R^d:\bar Hv=v\}$ and by $F^\perp=\{w\in\R^d:\bar Hw=0\}$ its orthogonal. We show that for all $v\in F$ and $w\in F^\perp$ with $v\neq 0$ and $w\neq 0$, 
\begin{equation} \label{RandomLabel1243}
\Var_{\theta^*}\crc{2v^\top\frac{\partial\log L}{\partial\theta}(Y,\theta^*)+w^\top\frac{\partial^2\log L}{\partial\theta\partial\theta^\top}(Y,\theta^*)w} \neq 0. 
\end{equation}
This will imply that for all $(v,w)\in F\times F^\perp$ with $v\neq 0$ and $w\neq 0$, 
\begin{equation}\label{RandomLabel123}
	\phi(v,w):=\left|\textsf{corr}_{\theta^*}\prt{2v^\top\frac{\partial\log L}{\partial\theta}(Y,\theta^*),w^\top\frac{\partial^2\log L}{\partial\theta\partial\theta^\top}(Y,\theta^*)w}\right| \neq 1.
\end{equation}
Indeed, if $\phi(v,w)=1$, then there must exist $\lambda\neq 0$ such that $w^\top\frac{\partial^2\log L}{\partial\theta\partial\theta^\top}(Y,\theta^*)w=\lambda v^\top\frac{\partial\log L}{\partial\theta}(Y,\theta^*)$ $\PP_{\theta^*}$-almost surely. This follows from the case of equality in Cauchy-Schwartz inequality, after noting that both $v^\top\frac{\partial\log L}{\partial\theta}(Y,\theta^*)$ and $w^\top\frac{\partial^2\log L}{\partial\theta\partial\theta^\top}(Y,\theta^*)w$ are not $\PP_{\theta^*}$-almost surely equal to zero, since by combining Theorem \ref{maintheorem} and Lemma \ref{MainLemma} (iii), the variance of the first random variable is nonzero, and the variance of a rescaled version of the second one is also nonzero. Therefore, the pair $(-\lambda v,w)$ violates \eqref{RandomLabel1243}. 

Let us denote by $\Sp$ the unit sphere in $\R^d$. Then, since the fonction $\phi$ defined above is continuous and $(F\cap\Sp)\times (F^\perp\cap \Sp)$ is a compact set, \eqref{RandomLabel123} implies that there exists $c\in [0,1)$ such that $\phi(v,w)\leq c$, for all $(v,w)\in (F\cap\Sp)\times (F^\perp\cap \Sp)$. Hence, by homogeneity, for all $(v,w)\in F\times F^\perp$ with $v\neq 0$ and $w\neq 0$, one still has $\phi(v,w)\leq c$ and
\begin{align*}
	& \left|\cov_{\theta^*}\prt{\crc{2v^\top\frac{\partial\log L}{\partial\theta}(Y,\theta^*),w^\top\frac{\partial^2\log L}{\partial\theta\partial\theta^\top}(Y,\theta^*)w}}\right| \\
	& \quad \quad \quad \leq c\sqrt{\Var_{\theta^*}\crc{2v^\top\frac{\partial\log L}{\partial\theta}(Y,\theta^*)}\Var_{\theta^*}\crc{w^\top\frac{\partial^2\log L}{\partial\theta\partial\theta^\top}(Y,\theta^*)w}} \\
	& \quad \quad \quad \leq c\Var_{\theta^*}\crc{2v^\top\frac{\partial\log L}{\partial\theta}(Y,\theta^*)} + c\Var_{\theta^*}\crc{w^\top\frac{\partial^2\log L}{\partial\theta\partial\theta^\top}(Y,\theta^*)w},
\end{align*}
yielding
\begin{align}
	\Var_{\theta^*}&\crc{2v^\top\frac{\partial\log L}{\partial\theta}(Y,\theta^*)+w^\top\frac{\partial^2\log L}{\partial\theta\partial\theta^\top}(Y,\theta^*)w} \nonumber \\
	& = \Var_{\theta^*}\crc{2v^\top\frac{\partial\log L}{\partial\theta}(Y,\theta^*)} +\Var_{\theta^*}\crc{w^\top\frac{\partial^2\log L}{\partial\theta\partial\theta^\top}(Y,\theta^*)w} \nonumber \\
	& \quad \quad \quad +2\cov_{\theta^*}\prt{\crc{2v^\top\frac{\partial\log L}{\partial\theta}(Y,\theta^*),w^\top\frac{\partial^2\log L}{\partial\theta\partial\theta^\top}(Y,\theta^*)w}} \nonumber \\
	& \geq (1-c)\Var_{\theta^*}\crc{2v^\top\frac{\partial\log L}{\partial\theta}(Y,\theta^*)} +(1-c)\Var_{\theta^*}\crc{w^\top\frac{\partial^2\log L}{\partial\theta\partial\theta^\top}(Y,\theta^*)w}. \label{RandomLabel121}
\end{align}
Now, by Theorem \ref{maintheorem} and by continuity, there exist positive constants $c_1$ and $c_2$ such that $\DS \Var_{\theta^*}\crc{2v^\top\frac{\partial\log L}{\partial\theta}(Y,\theta^*)}\geq c_1$ and $\DS \Var_{\theta^*}\crc{w^\top\frac{\partial^2\log L}{\partial\theta\partial\theta^\top}(Y,\theta^*)w}\geq c_2$, for all $v\in F\cap \Sp$ and $w\in F^\perp\cap\Sp$. Therefore, \eqref{RandomLabel121} yields the desired result, by homogeneity. Thus, what remains to be proved is \eqref{RandomLabel1243}. For that purpose, let $v\in F$ and $w\in F^\perp$ such that \eqref{RandomLabel1243} does not hold. Let us show that necessarily, $v=w=0$. First, note that it must hold that $\DS 2v^\top\frac{\partial\log L}{\partial\theta}(Y,\theta^*)+w^\top\frac{\partial^2\log L}{\partial\theta\partial\theta^\top}(Y,\theta^*)w$ is constant $\PP_{\theta^*}$-almost surely. Since its expectation is zero (the first term has expectation $\DS 2\diff\Psi(\theta^*)(v)$ which is zero since $\theta^*$ is a local maximum of $\Psi$ and the second term has expectation $\DS \diff^2\Psi(\theta^*)(w,w)$ which is zero by Theorem \ref{maintheorem}), it must hold that
\begin{equation} \label{RandomLabel6839}
2v^\top\frac{\partial\log L}{\partial\theta}(y,\theta^*)+w^\top\frac{\partial^2\log L}{\partial\theta\partial\theta^\top}(y,\theta^*)w=0, \quad \forall y\in\R^d.
\end{equation}
 
\subparagraph{Step 1: Computing $\DS v^\top\frac{\partial\log L}{\partial\theta}(y,\theta^*)$}

Recall that for all $y\in\R^d$ and $\theta\in\R^d$,
\begin{equation} \label{RecallFormula}
	\log L(y,\theta)=-\log\prt{(2\pi)^{d/2}|G|}+\log\sum_{g\in G}e^{-\frac{1}{2}\|y-g\theta\|^2}.
\end{equation}

Differentiating \eqref{RecallFormula} with respect to $\theta$ in the direction of $v$, and plugging $\theta=\theta^*$ yields
\begin{align*}
	v^\top\frac{\partial\log L}{\partial\theta}(y,\theta^*) & = -\frac{\sum_{g\in G}e^{-\frac{1}{2}\|y-g\theta^*\|^2}v^\top g^\top(y-g\theta^*)}{\sum_{g\in G}e^{-\frac{1}{2}\|y-g\theta^*\|^2}} \\
	& = -\frac{|H|\sum_{S\in E}e^{-\frac{1}{2}\|y-\bar S\theta^*\|^2}(\bar Sv)^\top (y-\bar S\theta^*)}{\sum_{g\in G}e^{-\frac{1}{2}\|y-g\theta^*\|^2}} \\
	& = -\frac{|H|\sum_{S\in E}e^{-\frac{1}{2}\|y-\bar S\theta^*\|^2}v^\top \bar S^\top y}{\sum_{g\in G}e^{-\frac{1}{2}\|y-g\theta^*\|^2}} -v^\top\theta^*, 
\end{align*}
where we used that $gv=\bar Sv$ for all $S\in E$ and all $g\in S$ in the second equality and that $\bar S^\top\bar S\theta^*=\theta^*$, for all $S\in E$ in the third equality.

\subparagraph{Step 2: Computing $w^\top\frac{\partial^2\log L}{\partial\theta\partial\theta^\top}(y,\theta^*)w$}

Now, differentiating \eqref{RecallFormula} twice with respect to $\theta$ in the direction of $w$, and plugging $\theta=\theta^*$ yields

\begin{align}
w^\top\frac{\partial^2\log L}{\partial\theta\partial\theta^\top}(y,\theta^*)w & = \frac{\left \|\sum_{g\in G}e^{-\frac{1}{2}\|y-g\theta^*\|^2}(y-g\theta^*)^\top gw \right\|^2}{\prt{\sum_{g\in G}e^{-\frac{1}{2}\|y-g\theta^*\|^2}}^2} \nonumber \\
& \quad -\frac{\sum_{g\in G}e^{-\frac{1}{2}\|y-g\theta^*\|^2} \prt{-\|w\|^2+w^\top g^\top(y-g\theta^*)(y-g\theta^*)^\top gw}}{\sum_{g\in G}e^{-\frac{1}{2}\|y-g\theta^*\|^2}}. \label{RandomLabel9837}
\end{align}
In the first term of the right hand side of \eqref{RandomLabel9837}, the sum inside the squared norm can be rewritten as $\DS|H|\sum_{S\in E}e^{-\frac{1}{2}\|y-\bar S\theta^*\|^2}(y-\bar S\theta^*)^\top \bar Sw$, which is zero, since for any $S\in E$ and any arbitrary $g\in S$, one can write $\bar Sw=g\bar Hw=0$. Thus, after trivial simplifications,
\begin{equation}
w^\top\frac{\partial^2\log L}{\partial\theta\partial\theta^\top}(y,\theta^*)w = \|w\|^2-\frac{\sum_{g\in G}e^{-\frac{1}{2}\|y-g\theta^*\|^2}w^\top g^\top yy^\top gw }{\sum_{g\in G}e^{-\frac{1}{2}\|y-g\theta^*\|^2}}. \label{RandomLabel9}
\end{equation}

\subparagraph{Step 3: Concluding}
Therefore, \eqref{RandomLabel6839} implies that, for all $y\in\R^d$, 
\begin{equation} \label{RandomLabel39481}
	-v^\top\theta^*+\|w\|^2-\frac{|H|\sum_{S\in E}e^{-\frac{1}{2}\|y-\bar S\theta^*\|^2}v^\top \bar S^\top y}{\sum_{g\in G}e^{-\frac{1}{2}\|y-g\theta^*\|^2}} -\frac{\sum_{g\in G}e^{-\frac{1}{2}\|y-g\theta^*\|^2}w^\top g^\top yy^\top gw }{\sum_{g\in G}e^{-\frac{1}{2}\|y-g\theta^*\|^2}}=0.
\end{equation}
Taking $y=0$ yields $v^\top\theta^*=\|w\|^2$, hence, \eqref{RandomLabel39481} becomes
\begin{equation} \label{RandomLabe81}
	\sum_{S\in E}e^{-\frac{1}{2}\|y-\bar S\theta^*\|^2}\prt{|H|v^\top \bar S^\top y -\sum_{g\in S}e^{-\frac{1}{2}\|y-g\theta^*\|^2}(w^\top g^\top y)^2} =0, \forall y\in\R^d.
\end{equation}
Now, using a similar argument as in the proof of Theorem \ref{maintheorem}, this implies that for all $S\in E$ and $y\in\R^d$, $\DS |H|v^\top \bar S^\top y -\sum_{g\in S}e^{-\frac{1}{2}\|y-g\theta^*\|^2}(w^\top g^\top y)^2=0$. Hence, both the linear and the quadratic terms in $y$ need to be zero, implying $v=w=0$.

\subsection{Proof of Lemma \ref{TaylorExp}}

\begin{proof} 
With the same computations as in the proof of Lemma \ref{IntermediateLemma4} below, one can show that for all $y\in\R^d$ and for $v,w\in\R^d$ with $\bar Hv=v$ and $\bar H w=0$,
\begin{equation*}
	w^\top\frac{\partial\log L}{\partial\theta}(y,\theta^*)=0;
\end{equation*}

\begin{equation*}
	\left|(v+w)^\top\frac{\partial^2\log L}{\partial\theta\partial\theta^\top}(y,\theta^*)(v+w)\right| \leq c\|y\|^2 (\|v\|^2+\|w\|^2);
\end{equation*}

\begin{equation*}
	\sup_{0\leq t\leq 1}\partial_\theta^3(\log L)(tv+tw,tv+tw,tv+tw)|\leq c(\|y\|+\|y^2\|+\|y\|^3)(\|v\|^3+\|w\|^3),
\end{equation*}
where $c>0$ is some positive constant. 

Now, let $y\in\R^d$ and $\theta\in\Theta$. Let $g_0\in G$ such that $\DS \|g_0\theta-\theta^*\|=\min_{g\in G}\|g\theta-\theta^*\|$ and write $g_0\theta-\theta^*=v+w$, where $v,w\in\R^d$ are such that $\bar Hv=v$ and $\bar Hw=0$. Then, if $\|v\|^2+\|w\|^4\leq \delta^2$, a Taylor expansion yields
\begin{equation*}
	\left|\log L(y,\theta)-\log L(y,\theta^*)\right|\leq c(1+\|y\|+\|y\|^2+\|y\|^3)\delta,
\end{equation*}
as long as $\delta$ is small enough, independently of $y$. This ends the proof of the lemma, with $F(y)=c(1+\|y\|+\|y\|^2+\|y\|^3), y\in\R^d$.
\end{proof}

\end{document}